\documentclass[12pt,reqno]{amsart}
\usepackage{amsmath,amssymb,amsfonts,amsthm}
\usepackage[left=3cm, right=3cm, top=2.8cm, bottom=2.8cm]{geometry}
\usepackage[utf8]{inputenc}
\usepackage[T1]{fontenc}
\usepackage{xcolor}
\usepackage{multirow}
\usepackage{graphicx}
\usepackage{hyperref}
\usepackage{graphicx}
\usepackage{calc}%
\usepackage{hyperref}
\hypersetup{
    colorlinks=true,
    linkcolor=blue,
    filecolor=magenta,
    urlcolor=black,
}

\linespread{1.3}

\usepackage{stackrel}


\DeclareMathOperator*{\esssup}{ess\,sup}
\newtheorem{theorem}{Theorem}
\newtheorem{lemma}[theorem]{Lemma}

\newtheorem{corollary}[theorem]{Corollary}
\newtheorem{definition}[theorem]{Definition}
\newtheorem{remark}[theorem]{Remark}

\theoremstyle{plain}

\linespread{1.3}


\begin{document}

\title[The Riemann-Liouville fractional integral]{The Riemann-Liouville fractional integral in Bochner-Lebesgue spaces IV}


\author[P. M. Carvalho-Neto]{Paulo M. de Carvalho-Neto}
\address[Paulo M. de Carvalho Neto]{Departamento de Matem\'atica, Centro de Ciências Físicas e Matemáticas, Universidade Federal de Santa Catarina, Florian\'{o}polis - SC, Brazil}
\email[]{paulo.carvalho@ufsc.br}
\author[R. Fehlberg J\'{u}nior]{Renato Fehlberg J\'{u}nior}
\address[Renato Fehlberg J\'unior]{Departamento de Matem\'atica, Universidade Federal do Esp\'{i}rito Santo, Vit\'{o}ria - ES, Brazil}
\email[]{renato.fehlberg@ufes.br}


\subjclass[2010]{26A33, 47G10}


\keywords{Riemann-Liouville fractional derivative, Riemann-Liouville fractional integral, Bochner-Sobolev space, RL fractional Bochner-Sobolev space, Hölder space}


\begin{abstract}
In this manuscript, we extend our previous work on the Riemann-Liouville fractional integral of order $\alpha > 0$ in Bochner-Lebesgue spaces. We specifically address the remaining cases concerning its boundedness when $\alpha > 1/p$. Furthermore, we extend some of our previous results by investigating some non-standard function spaces. Finally, we provide a comprehensive summary of the obtained results.
\end{abstract}

\maketitle

\section{An Overview of the Riemann-Liouville Fractional Integral}

Throughout this manuscript, we assume that $t_0 < t_1$ are fixed real numbers, $\mathbb{N}^\ast:=\{1,2,\ldots\}$, $p^\prime=p/(p-1)$ is the H\"older conjugate of $p$, and $X$ is a Banach space. Let us begin by revisiting the definitions and notations of specific classical vector spaces and operators. At first we recall the classical Bochner-Lebesgue spaces (for details, see \cite{ArBaHiNe1}).

\begin{definition} If $1\leq p\leq\infty$, we represent the set of all Bochner measurable functions $f:[t_0,t_1]\rightarrow X$, for which $\|f\|_X\in L^{p}(t_0,t_1)$, by the symbol $L^{p}(t_0,t_1;{X})$. Moreover, $L^{p}(t_0,t_1;{X})$ is a Banach space when considered with the norm
$$\|f\|_{L^p(t_0,t_1;X)}:=\left\{\begin{array}{ll}\bigg[\displaystyle\int_{t_0}^{t_1}{\|f(s)\|^p_X}\,ds\bigg]^{1/p},&\textrm{ if }p\in[1,\infty),\vspace*{0.3cm}\\
\esssup_{s\in [t_0,t_1]}\|f(s)\|_X,&\textrm{ if }p=\infty.\end{array}\right.$$
\end{definition}

Now let us recall the Bochner-Sobolev spaces (for more details see \cite{CaHa1}).

\begin{definition} Let $n\in\mathbb{N}^\ast$ and $1\leq p\leq\infty$. The set $W^{n,p}(t_0,t_1;X)$ denotes the subspace of $L^{p}(t_0,t_1;X)$ of every function $f:[t_0,t_1]\rightarrow X$ that has $n-$weak derivatives in $L^{p}(t_0,t_1;X)$. By considering the norm
$$\|f\|_{W^{n,p}(t_0,t_1;X)}:=\sum_{j=0}^n\big\|f^{(j)}\big\|_{L^{p}(t_0,t_1;X)},$$
the set $W^{n,p}(t_0,t_1;X)$ becomes a Banach space. For the completeness of the definition we assume that $W^{0,p}(t_0,t_1;X)=L^p(t_0,t_1;X)$.
\end{definition}

Now we present the notions of Riemann-Liouville fractional integral and derivative.

\begin{definition} Let $\alpha\in(0,\infty)$ and $f:[t_0,t_1]\rightarrow{X}$. The Riemann-Liouville (RL for short) fractional integral of order $\alpha$ at $t_0$ of function $f$ is defined by
\begin{equation}\label{fracinitt}J_{t_0,t}^\alpha f(t):=\dfrac{1}{\Gamma(\alpha)}\displaystyle\int_{t_0}^{t}{(t-s)^{\alpha-1}f(s)}\,ds,\end{equation}
for every $t\in [t_0,t_1]$ such that the integral at the right-hand side of \eqref{fracinitt} exists. Above $\Gamma$ denotes the classical Euler's gamma function.
\end{definition}

\begin{definition} Let $\alpha\in(0,\infty)$ and consider $f:[t_0,t_1]\rightarrow{X}$. The Riemann-Liouville (RL for short) fractional derivative of order $\alpha$ at $t_0$ of function $f$ is defined by
\begin{equation}\label{fracinit}D_{t_0,t}^\alpha f(t):=\dfrac{d^{[\alpha]}}{dt^{[\alpha]}}\left[J_{t_0,t}^{[\alpha]-\alpha} f(t)\right]=\dfrac{d^{[\alpha]}}{dt^{[\alpha]}}\left[\dfrac{1}{\Gamma([\alpha]-\alpha)}\displaystyle\int_{t_0}^{t}{(t-s)^{[\alpha]-\alpha-1}f(s)}\,ds\right],\end{equation}
for every $t\in [t_0,t_1]$ such that the right side of \eqref{fracinit} exists. The derivative above is considered in the weak sense, and $[\alpha]$ denotes the least integer greater than $\alpha$.
\end{definition}

\begin{remark} In this work, we frequently use two relationships between the RL fractional derivative and the corresponding fractional integral:
\begin{itemize}
\item[(i)] It is well known in the literature (cf. \cite[Proposition 2.11]{CarFe3}) that if $\alpha \in (0,1)$, $f \in L^1(t_0,t_1; X)$, and $J_{t_0,t}^{1-\alpha} f \in W^{1,1}(t_0,t_1; X)$, then
$$J_{t_0,t}^{\alpha}\big[D_{t_0,t}^\alpha f(f)\big]=f(t)-\dfrac{(t-t_0)^{\alpha-1}}{\Gamma(\alpha)}\Big[J_{t_0,s}^{1-\alpha}f(s)\big|_{s=t_0}\Big],$$
for almost every $t\in[t_0,t_1]$. Moreover, if $f\in C([t_0,t_1];X)$, the above identity simplifies to
\begin{equation}\label{auxlatul1}J_{t_0,t}^{\alpha}\big[D_{t_0,t}^\alpha f(t)\big]=f(t),\end{equation}
for almost every $t\in[t_0,t_1]$. Finally, if $f(t)$ and $D_{t_0,t}^{\alpha} f(t)$ belong to $C([t_0,t_1]; X)$, \cite[Proposition 4.6]{CarFe3} implies that $J_{t_0,t}^{\alpha} \big[D_{t_0,t}^{\alpha} f(t)\big]$ is continuous on $[t_0,t_1]$, \eqref{auxlatul1} holds for every $t \in [t_0,t_1]$, and
$$J_{t_0,t}^{\alpha}\big[D_{t_0,t}^\alpha f(t)\big]\big|_{t=t_0}=0.$$
Therefore, we conclude that $f(t_0)=0$.\vspace*{0.2cm}
\item[(ii)] If $f\in W^{1,1}(t_0,t_1;X)$, $f(t_0)=0$, and $\alpha\in(0,1)$, it holds that
$$\dfrac{d}{dt}\Big[J_{t_0,t}^{\alpha}f(t)\Big]=J_{t_0,t}^{\alpha}f^\prime(t),$$
for almost every $t\in[t_0,t_1]$; see \cite[Remark 2.10]{CarFe3} for details.
\end{itemize}
\end{remark}

With the above notions in mind, let us recall that in our three recent works \cite{CarFe0, CarFe1, CarFe2}, we have conducted an extensive investigation into the Riemann-Liouville fractional integral of order $\alpha > 0$ when it is considered as a linear operator from $L^p(t_0, t_1; X)$ into $L^q(t_0, t_1; X)$ (or another suitable function space), with $X$ being a Banach space. 

In this sequence of works, our focus was on establishing the boundedness and, when possible, compactness of this operator, along with some related properties. This study led to several noteworthy conclusions; however, it is important to emphasize that not all cases have been addressed so far. This is the main reason why, in this final work, we intend not only to address these remaining cases but also to provide an overview of the Riemann-Liouville fractional integral as a bounded operator. 

We begin by providing a comprehensive list of all our results on the boundedness of the Riemann-Liouville fractional integral, including their references and the specific cases addressed in this manuscript.

\begin{itemize}
\item[(i)] For $p=1$ we have:\vspace*{0.2cm}
\begin{itemize}
\item[(a$_1$)] if $\alpha\in(0,1)$, then $J_{t_0,t}^\alpha:L^1(t_0,t_1;X)\rightarrow L^q(t_0,t_1;X)$ is a bounded linear operator for any $q\in\big[1,1/(1-\alpha)\big)$. Moreover, for any $r\in [1/(1-\alpha),\infty]$ it holds that $J_{t_0,t}^\alpha\big(L^p(t_0,t_1;X)\big)\not\subset L^r(t_0,t_1;X)$; see \cite[{Theorem 7}]{CarFe1}.\vspace*{0.2cm}
\item[(a$_2$)] if $\alpha\in(0,1)$, then $J_{t_0,t}^\alpha:L^1(t_0,t_1;X)\rightarrow L_w^{1/(1-\alpha)}(t_0,t_1;X)$ is a ``bounded'' linear operator; see {\cite[Corollary 1]{CarFe1}}. For details on the space $L_w^{r}(t_0,t_1;X)$, see Definition \ref{weakdef} or {\cite[Remark 4]{CarFe1}}. Furthermore, for any $r\in \big(1/(1-\alpha),\infty\big]$ we have that $J_{t_0,t}^\alpha\big(L^p(t_0,t_1;X)\big)\not\subset L_w^r(t_0,t_1;X)$; see Theorem \ref{comp01}. \vspace*{0.2cm}
\item[(b)] if $\alpha \in [1,\infty)$ and $\gamma \in (0,\alpha]$, then $J_{t_0,t}^\alpha:L^1(t_0,t_1;X)\rightarrow W_{RL}^{\gamma,1}(t_0,t_1;X)$ is a bounded linear operator; see Theorem \ref{consobfrac} and {\color{black}\cite[Theorem 10]{CarFe2}}. Moreover, for any $r_1 \in [\alpha,\infty)$ and $r_2 \in [1,\infty)$, with $r_1 + r_2 > 1 + \alpha$, it holds that $J_{t_0,t}^\alpha\big(L^1(t_0,t_1;X)\big)\not\subset W_{RL}^{r_1,r_2}(t_0,t_1;X)$; see {\color{black}\cite[Theorem 11]{CarFe2}}. For details on $W_{RL}^{\alpha,1}(t_0,t_1;X)$, see Definition \ref{sobolevriemann11}. For the definition and properties of $W_{RL}^{r_1,r_2}(t_0,t_1;X)$, see {\color{black}\cite[Definition 7 and Proposition 8]{CarFe2}}. \vspace*{0.2cm}
\end{itemize}

\item[(ii)] For $p\in(1,\infty)$ we have:\vspace*{0.2cm}
\begin{itemize}
\item[(a)] if $\alpha\in(0,1/p)$, then $J_{t_0,t}^\alpha:L^p(t_0,t_1;X)\rightarrow L^q(t_0,t_1;X)$ is a bounded linear operator for any $q\in\big[1,p/(1-p\alpha)\big]$. Moreover, for any $r\in (p/(1-p\alpha),\infty]$ it holds that $J_{t_0,t}^\alpha\big(L^p(t_0,t_1;X)\big)\not\subset L^r(t_0,t_1;X)$; see \cite[{Theorem 6}]{CarFe1}.\vspace*{0.2cm}
\item[(b$_1$)] if $\alpha=1/p$ and $q\geq 1$, then $J_{t_0,t}^{1/p}:L^p(t_0,t_1;X)\rightarrow L^q(t_0,t_1;X)$ is a bounded linear operator. Moreover, it holds that $J_{t_0,t}^{1/p}\big(L^p(t_0,t_1;X)\big)\not\subset L^\infty(t_0,t_1;X)$; see {\color{black}\cite[Theorem 28]{CarFe2}}.\vspace*{0.2cm}
\item[(b$_2$)] if $\alpha=1/p$ and $\gamma\geq 1/p^\prime$, then $J_{t_0,t}^{1/p}:L^p(t_0,t_1;X)\rightarrow {BMO}(t_0,t_1;X)\cap K_{\gamma}(t_0,t_1;X)$ is a bounded linear operator; see {\color{black}\cite[Theorem 31]{CarFe2}}. For details on the spaces ${BMO}(t_0,t_1;X)$ and $K_{\gamma}(t_0,t_1;X)$, see Definitions \ref{bmospace} and \ref{karaspace} or {\color{black}\cite[Section 4: Some special spaces]{CarFe2}}.\vspace*{0.2cm}
\item[(c)] if $\alpha\in(1/p,\infty)$, then $J_{t_0,t}^\alpha:L^p(t_0,t_1;X)\rightarrow C\big([t_0,t_1];X\big)$ is a bounded linear operator; see Theorem \ref{continitial01}.
\item[(d)] if $n\in\mathbb{N}$, $\alpha\in\big(n+(1/p),n+1+(1/p)\big)$ and $q\in\big(0,\alpha-n-(1/p)\big]$, then $J_{t_0,t}^\alpha:L^p(t_0,t_1;X)\rightarrow H^{n,q}([t_0,t_1];X)$ is a bounded linear operator; see Definition \ref{holderfunction}, Theorem \ref{lebsguecontinuity2} and Corollary \ref{lebsguecontinuity3}. Moreover, if $r\in\big(\alpha-n-(1/p),1\big)$, then $J_{t_0,t}^\alpha(L^p(t_0,t_1;X))\not\subset H^{n,r}([t_0,t_1];X)$;  see Remark \ref{lebsguecontinuity4}. \vspace*{0.2cm}
\item[(e)] if $n\in\mathbb{N}^\ast$, $\alpha=n+(1/p)$, and $\gamma\geq 1/p^\prime$, then $J_{t_0,t}^\alpha:L^p(t_0,t_1;X)\rightarrow K^{n,p}_{\gamma}(t_0,t_1;X)$ is a bounded linear operator; see Definition \ref{lebsguecontinuity6} for details on the space $K^{n,p}_{\gamma}(t_0,t_1;X)$ and Theorem \ref{lebsguecontinuity5} for the proof of the result.\vspace*{0.3cm}

\end{itemize}
\item[(iii)] For $p=\infty$ we have:\vspace*{0.2cm}
\begin{itemize}
\item[(a)] if $\alpha\in(0,1)$, then $J_{t_0,t}^\alpha:L^\infty(t_0,t_1;X)\rightarrow H^{0,\alpha}([t_0,t_1];X)$ is a bounded linear operator. Moreover, for any $r\in (\alpha,1)$ it holds that $J_{t_0,t}^\alpha\big(L^\infty(t_0,t_1;X)\big)\not\subset H^{0,r}([t_0,t_1];X)$; see Theorem \ref{infcase1}.\vspace*{0.2cm}
\item[(b)] if $\alpha> 1$, $\alpha\not\in\mathbb{N}$ and $q\in\big(0,\alpha-[\alpha]+1\big]$, then $J_{t_0,t}^\alpha:L^\infty(t_0,t_1;X)\rightarrow H^{[\alpha]-1,q}([t_0,t_1];X)$ is a bounded linear operator. Moreover, for any $r\in \big(\alpha-[\alpha]+1,1\big)$ it holds that $J_{t_0,t}^\alpha\big(L^\infty(t_0,t_1;X)\big)\not\subset H^{[\alpha]-1,r}([t_0,t_1];X)$; see Theorem \ref{infcase2}.
\item[(c)] if $\alpha\in\mathbb{N}^\ast$, then $J_{t_0,t}^\alpha:L^\infty(t_0,t_1;X)\rightarrow W^{\alpha,\infty}(t_0,t_1;X)$ is a bounded linear operator; see Theorem \ref{infcase2}.
\end{itemize}
\end{itemize}

This manuscript is organized as follows. Section \ref{sec3} addresses the continuity of the Riemann-Liouville (RL) fractional integral for $\alpha>1/p$ (for any $p>1$); then this result is improved for $\alpha \in (n+(1/p), n +1 + (1/p))$ and $\alpha = n + 1/p$, for $n\in\mathbb{N}^\ast$. In Section \ref{prim}, we enhance previous results: Subsection \ref{sub01} reviews weak Bochner-Lebesgue spaces and improves \cite[Corollary 1]{CarFe1}; Subsection \ref{sub02} revisits the spaces $W_{RL}^{\gamma,1}(t_0,t_1; X)$ which was originally introduced by Carbotti and Comi in \cite{CaCo1} (see also \cite{BoId1,IdWa1}) and extended to vector-valued functions in \cite{CarFe2}, enhancing \cite[Theorem 10]{CarFe2}; finally, Subsection \ref{sub03} addresses the continuity of the RL fractional integral on $L^\infty(t_0,t_1;X)$.

\section{Boundeness of the RL Fractional Integral - The Supercritical Case}
\label{sec3}

In \cite{HaLi1}, the classical paper by Hardy-Littlewood, the results \cite[Theorem 12 and 14]{HaLi1} deal with the case when the order $\alpha$ of the RL fractional integral is greater than $1/p$ and $X=\mathbb{R}$. In a certain sense, the adaptation of this theorems to the Bochner-Lebesgue spaces might appear intuitive. Nevertheless, we have opted for a more compelling approach, one that involves the introduction of simpler proofs, supported by novel arguments.

In order to address our objective, we recall the famous dominated-convergence theorem-like, which was proved by Brezis-Lieb (see \cite[Theorem 1]{BrLi1}).

\begin{lemma}\label{theoBL} Let $p>1$, $f\in L^p(t_0,t_1)$ and $\{f_k\}_{k=1}^\infty\subset L^p(t_0,t_1)$. If $f_k\rightarrow f$ almost everywhere and there exists $M>0$ such that $\|f_k\|_{L^p(t_0,t_1)}\leq M$, for every $k\in\mathbb{N}^\ast$, then $f\in L^p(t_0,t_1)$ and
$$\lim_{k\rightarrow\infty}{\Big(\|f_k\|_{L^p(t_0,t_1)}^p-\|f-f_k\|_{L^p(t_0,t_1)}^p\Big)}=\|f\|_{L^p(t_0,t_1)}^p.$$
\end{lemma}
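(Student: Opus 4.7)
The plan is to follow the classical Brezis-Lieb argument, which rests on an elementary pointwise inequality together with the dominated convergence theorem.

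First, I would verify that $f\in L^p(t_0,t_1)$. Since $|f_k|^p\to|f|^p$ almost everywhere and $\int_{t_0}^{t_1}|f_k|^p\,ds\leq M^p$, Fatou's lemma gives
$$\int_{t_0}^{t_1}|f(s)|^p\,ds\leq\liminf_{k\to\infty}\int_{t_0}^{t_1}|f_k(s)|^p\,ds\leq M^p,$$
so $\|f\|_{L^p(t_0,t_1)}\leq M$ and, by the triangle inequality, $\|f_k-f\|_{L^p(t_0,t_1)}\leq 2M$ uniformly in $k$.

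The heart of the argument is the following elementary inequality: for every $\varepsilon>0$ there is a constant $C_\varepsilon>0$ such that for all $a,b\in\mathbb{R}$,
$$\bigl||a+b|^p-|a|^p\bigr|\leq\varepsilon|a|^p+C_\varepsilon|b|^p.$$
This can be obtained by separating the cases $|b|\leq\delta|a|$ (where the mean value theorem applied to $x\mapsto|x|^p$ produces the $\varepsilon|a|^p$ term after suitable choice of $\delta$) and $|b|>\delta|a|$ (a crude bound yielding $C_\varepsilon|b|^p$). Applied pointwise with $a=f_k-f$ and $b=f$ (so $a+b=f_k$), it gives
$$\bigl||f_k|^p-|f_k-f|^p-|f|^p\bigr|\leq\varepsilon|f_k-f|^p+(C_\varepsilon+1)|f|^p.$$

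Next I introduce the nonnegative function
$$W_k:=\Bigl(\bigl||f_k|^p-|f_k-f|^p-|f|^p\bigr|-\varepsilon|f_k-f|^p\Bigr)^+,$$
so that $0\leq W_k\leq(C_\varepsilon+1)|f|^p\in L^1(t_0,t_1)$, and $W_k\to 0$ almost everywhere. The dominated convergence theorem yields $\int_{t_0}^{t_1}W_k\,ds\to 0$. Combining this with
$$\int_{t_0}^{t_1}\bigl||f_k|^p-|f_k-f|^p-|f|^p\bigr|\,ds\leq\int_{t_0}^{t_1}W_k\,ds+\varepsilon\|f_k-f\|_{L^p(t_0,t_1)}^p\leq\int_{t_0}^{t_1}W_k\,ds+\varepsilon(2M)^p,$$
and taking $\limsup_{k\to\infty}$ leaves only $\varepsilon(2M)^p$; letting $\varepsilon\to 0^+$ closes the argument.

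The main obstacle is the pointwise inequality, which, while elementary, is the only step where the hypothesis $p>1$ is genuinely used (through the $C^1$ regularity of $x\mapsto|x|^p$). The remaining manipulations are routine applications of Fatou's lemma and the dominated convergence theorem.
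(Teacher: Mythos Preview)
Your argument is correct; it is exactly the classical Brezis--Lieb proof. The paper itself does not supply a proof of this lemma at all: it merely states the result and refers the reader to \cite[Theorem~1]{BrLi1}, which is precisely the source whose argument you have reproduced.
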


With Lemma \ref{theoBL} in mind, we now present two results that replace \cite[Theorems 12 and 14]{HaLi1} in the context of Bochner-Lebesgue spaces.

\begin{theorem}\label{continitial01} Consider $p\in(1,\infty)$, $\alpha\in(1/p,\infty)$ and assume that $f\in L^p(t_0,t_1;X)$. Then $J_{t_0,t}^\alpha f\in C\big([t_0,t_1];X\big)$ and there exists $K=K(\alpha,p,t_0,t_1)>0$ such that
\begin{equation*}\sup_{t\in[t_0,t_1]}\|J^\alpha_{t_0,s}f(s)\|_X\leq K\left[\int_{t_0}^{t_1}{\|f(s)\|_X^p}\,ds\right]^{1/p}.\end{equation*}
\end{theorem}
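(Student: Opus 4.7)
The plan is to split the proof into two independent pieces: a straightforward Hölder estimate that yields the pointwise (hence uniform) norm bound, and an application of Brezis-Lieb (Lemma \ref{theoBL}) to the scalar kernels of $J_{t_0,t}^\alpha$ in order to establish continuity. This avoids the usual ``approximate $f$ by continuous functions and pass to the uniform limit'' strategy and aligns with the authors' stated preference for simpler arguments supported by novel ingredients.

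For the norm bound, fix $t\in[t_0,t_1]$. By Hölder's inequality,
\[\|J_{t_0,t}^\alpha f(t)\|_X \leq \frac{1}{\Gamma(\alpha)}\int_{t_0}^t (t-s)^{\alpha-1}\|f(s)\|_X\,ds \leq \frac{1}{\Gamma(\alpha)}\left[\int_{t_0}^t (t-s)^{(\alpha-1)p'}\,ds\right]^{1/p'}\|f\|_{L^p(t_0,t_1;X)}.\]
The assumption $\alpha>1/p$ is equivalent to $(\alpha-1)p'>-1$, so the integral converges to $(t-t_0)^{p'(\alpha-1/p)}/[p'(\alpha-1/p)]$; taking the supremum over $t$ yields the required inequality with explicit constant $K=(t_1-t_0)^{\alpha-1/p}/\bigl(\Gamma(\alpha)\,[p'(\alpha-1/p)]^{1/p'}\bigr)$.

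For continuity, fix $t\in[t_0,t_1]$ and a sequence $\{t_n\}\subset[t_0,t_1]$ with $t_n\to t$. Introduce the scalar kernels $\varphi_n(s):=\chi_{[t_0,t_n]}(s)(t_n-s)^{\alpha-1}$ and $\varphi(s):=\chi_{[t_0,t]}(s)(t-s)^{\alpha-1}$, viewed as elements of $L^{p'}(t_0,t_1)$. Another application of Hölder to the difference gives
\[\|J_{t_0,t_n}^\alpha f(t_n)-J_{t_0,t}^\alpha f(t)\|_X \leq \frac{1}{\Gamma(\alpha)}\,\|\varphi_n-\varphi\|_{L^{p'}(t_0,t_1)}\,\|f\|_{L^p(t_0,t_1;X)},\]
so it suffices to prove $\|\varphi_n-\varphi\|_{L^{p'}}\to 0$. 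Three observations reduce this to Brezis-Lieb: (i) $\varphi_n(s)\to\varphi(s)$ for every $s\neq t$, hence almost everywhere; (ii) the explicit formula from the first step shows $\tau\mapsto\|\varphi_\tau\|_{L^{p'}}$ is continuous on $[t_0,t_1]$, so $\|\varphi_n\|_{L^{p'}}\to\|\varphi\|_{L^{p'}}$ and $\{\varphi_n\}$ is uniformly bounded in $L^{p'}$; (iii) $p'\in(1,\infty)$, since $p\in(1,\infty)$. Lemma \ref{theoBL} then asserts that $\|\varphi_n\|_{L^{p'}}^{p'}-\|\varphi_n-\varphi\|_{L^{p'}}^{p'}\to\|\varphi\|_{L^{p'}}^{p'}$, which combined with (ii) forces $\|\varphi_n-\varphi\|_{L^{p'}}\to 0$.

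The only delicate step is supplying both the a.e.\ convergence and the norm convergence of the kernels; once these are in hand, Brezis-Lieb does the real work of upgrading them to strong $L^{p'}$ convergence. The endpoint $t=t_0$ presents no difficulty, since there $\varphi\equiv 0$ and the pointwise bound from the first step already gives $\|J_{t_0,t_n}^\alpha f(t_n)\|_X\leq C(t_n-t_0)^{\alpha-1/p}\|f\|_{L^p}\to 0$, bypassing the lemma entirely.
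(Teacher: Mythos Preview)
Your proof is correct and rests on the same two pillars as the paper's---a H\"older estimate for the sup-norm bound and the Brezis--Lieb lemma (Lemma~\ref{theoBL}) for continuity---but your packaging is tighter in two respects. First, the paper restricts to $\alpha\in(1/p,1)$ and then reduces the case $\alpha\geq 1$ to this range via the semigroup identity $J_{t_0,t}^\alpha=J_{t_0,t}^{(p+1)/(2p)}\circ J_{t_0,t}^{\alpha-(p+1)/(2p)}$ together with \cite[Theorem~3.1]{CarFe0}; your argument never needs this split, since the kernel $s\mapsto\chi_{[t_0,t]}(s)(t-s)^{\alpha-1}$ lies in $L^{p'}(t_0,t_1)$ for every $\alpha>1/p$ and the Brezis--Lieb step goes through uniformly. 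Second, the paper treats right and left continuity separately---right limits by splitting the difference as $\mathcal{H}_w(\tau)+\mathcal{I}_w(\tau)$ and invoking dominated convergence on $\mathcal{H}_w$, left limits by applying Brezis--Lieb to the truncated kernels---whereas your sequential formulation handles both sides at once through a single appeal to Lemma~\ref{theoBL}. The paper's decomposition has the mild advantage of exhibiting the explicit H\"older modulus that later feeds into Theorem~\ref{lebsguecontinuity2}, but for the statement at hand your route is shorter and self-contained.
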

\begin{proof} At first, assume that $\alpha<1$. Let $t \in (t_0,t_1]$ and consider the function $\phi_t:[t_0,t_1]\rightarrow\mathbb{R}$ defined by
$$\phi_t(s) = \left[\frac{(t-s)^{\alpha-1}}{\Gamma(\alpha)}\right] \chi_{[t_0,t]}(s),$$
where
$$\chi_{[t_0,t]}(s)=\left\{\begin{array}{ll}1,&\textrm{ if }s\in[t_0,t),\\0,&\textrm{ if }s\in[t,t_1].\end{array}\right.$$

Since $\phi_t \in L^{p^\prime}(t_0,t_1)$ and $f \in L^{p}(t_0,t_1; X)$, H\"{o}lder's inequality ensures that $\phi_t f \in L^1(t_0,t_1; X)$ and
$$\int_{t_0}^{t_1} \phi_t(s) f(s) \, ds = \frac{1}{\Gamma(\alpha)} \int_{t_0}^t (t-s)^{\alpha-1} f(s) \, ds,$$
which implies that $J^\alpha_{t_0,t} f(t)$ exists for every $t \in (t_0,t_1]$. Additionally, H\"{o}lder's inequality provides
\begin{multline*}
\big\| J_{t_0,t}^\alpha f(t) \big\|_X = \left\| \frac{1}{\Gamma(\alpha)} \int_{t_0}^t (t-s)^{\alpha-1} f(s) \, ds \right\|_X \\
\leq \frac{1}{\Gamma(\alpha)} \left( \int_{t_0}^t (t-s)^{p^\prime(\alpha-1)} \, ds \right)^{1/p^\prime} \left( \int_{t_0}^t \|f(s)\|_X^p \, ds \right)^{1/p},
\end{multline*}
for every $t \in (t_0,t_1]$, implying that
\begin{equation}\label{new01}\big\|J_{t_0,t}^\alpha f(t)\big\|_X\leq\left(\dfrac{(t-t_0)^{\alpha-(1/p)}}{\big\{p^\prime[\alpha-(1/p)]\big\}^{1/p^\prime}\Gamma(\alpha)}\right)\|f\|_{L^p(t_0,t_1;X)},\end{equation}
for every $t \in (t_0,t_1]$.

Next, we prove the continuity of $J_{t_0,t}^\alpha f(t)$ on $[t_0,t_1]$. Define $J_{t_0,t}^\alpha f(t)|_{t=t_0} = 0$. From \eqref{new01}, it follows that $J_{t_0,t}^\alpha f(t)$ is continuous at $t = t_0$. Now, fix $w \in (t_0,t_1)$ and observe that for any $\tau \in (w,t_1]$, we have
\begin{multline}\label{cont00}
\big\| J_{t_0,\tau}^\alpha f(\tau) - J_{t_0,w}^\alpha f(w) \big\|_X \leq \frac{1}{\Gamma(\alpha)} \int_{t_0}^w \left| (w-s)^{\alpha-1} - (\tau-s)^{\alpha-1} \right| \| f(s) \|_X \, ds \\
+ \frac{1}{\Gamma(\alpha)} \int_{w}^\tau (\tau-s)^{\alpha-1} \| f(s) \|_X \, ds=:\mathcal{H}_w(\tau)+\mathcal{I}_w(\tau).
\end{multline}

For $\mathcal{H}_w(\tau)$, we have
$$\big|(w-s)^{\alpha-1}-(\tau-s)^{\alpha-1}\big|\,\big\|f(s)\big\|_X\leq (w-s)^{\alpha-1}\,\big\|f(s)\big\|_X,$$
for almost every $s\in [t_0,w]$, with the function in the right side of the inequality being integrable in $[t_0,w]$, and that
$$\lim_{\tau\rightarrow w^+}{\big[(w-s)^{\alpha-1}-(\tau-s)^{\alpha-1}\big]\,\big\|f(s)\big\|_X}=0,$$
for almost every $s\in [t_0,w]$. Then, dominated convergence theorem guarantees that
\begin{equation}\label{cont01}\lim_{\tau\rightarrow w^+}{\mathcal{H}_w(\tau)}=0.\end{equation}
On the other hand, like it was done to deduce inequality \eqref{new01}, we obtain
\begin{equation*}\mathcal{I}_w(\tau)\leq\left(\dfrac{(\tau-w)^{\alpha-(1/p)}}{\big\{p^\prime[\alpha-(1/p)]\big\}^{1/p^\prime}\Gamma(\alpha)}\right)\|f\|_{L^p(t_0,t_1;X)},\end{equation*}
for almost every $\tau\in (w,t_1]$. Therefore
\begin{equation}\label{cont02}\lim_{\tau\rightarrow w^+}{\mathcal{I}_w(\tau)}=0.\end{equation}

Hence, items \eqref{cont00}, \eqref{cont01} and \eqref{cont02} ensure that
\begin{equation}\label{cont03}\lim_{\tau\rightarrow w^+}{J_{t_0,\tau}^\alpha f(\tau)}=J_{t_0,w}^\alpha f(w),\end{equation}
in the topology of $X$.

Now fix $w\in(t_0,t_1]$ and consider $\tau\in(t_0,w)$. Then
\begin{multline}\label{cont05}\big\|J_{t_0,\tau}^\alpha f(\tau)-J_{t_0,w}^\alpha f(w)\big\|_X\\\leq\dfrac{1}{\Gamma(\alpha)}{\int_{t_0}^w{\big|(\tau-s)^{\alpha-1}\chi_{[t_0,\tau]}(s)-(w-s)^{\alpha-1}\big|\,\big\|f(s)\big\|_X}\,ds}.\end{multline}

H\"older's inequality ensures that
\begin{multline*}\int_{t_0}^w{\big|(\tau-s)^{\alpha-1}\chi_{[t_0,\tau]}(s)-(w-s)^{\alpha-1}\big|\,\big\|f(s)\big\|_X}\,ds
\\\leq\underbrace{\left(\int_{t_0}^w{\big|(\tau-s)^{\alpha-1}\chi_{[t_0,\tau]}(s)-(w-s)^{\alpha-1}\big|^{p^\prime}}\,ds\right)^{1/p^\prime}}_{=\mathcal{J}_w(\tau)}\left(\int_{t_0}^w{\|f(s)\|_X^p}\,ds\right)^{1/p}.\end{multline*}

Observe that we may rewrite $\big[\mathcal{J}_w(\tau)\big]^{p^\prime}$ as
\begin{multline}\label{auxBL}\left[\int_{t_0}^w{\big|(\tau-s)^{\alpha-1}\chi_{[t_0,\tau]}(s)-(w-s)^{\alpha-1}\big|^{p^\prime}}\,ds
-\int_{t_0}^w{\big|(\tau-s)^{\alpha-1}\chi_{[t_0,\tau]}(s)\big|^{p^\prime}}\,ds\right]\\+\int_{t_0}^w{\big|(\tau-s)^{\alpha-1}\chi_{[t_0,\tau]}(s)\big|^{p^\prime}}\,ds.\end{multline}

Since we already know that
$$\int_{t_0}^w{\big|(\tau-s)^{\alpha-1}\chi_{[t_0,\tau]}(s)\big|^{p^\prime}}\,ds=\dfrac{(\tau-t_0)^{(\alpha-1)p^\prime+1}}{(\alpha-1)p^\prime+1}\leq\dfrac{(w-t_0)^{(\alpha-1)p^\prime+1}}{(\alpha-1)p^\prime+1},$$
for every $\tau\in[t_0,w)$, and that $(\tau-s)^{\alpha-1}\chi_{[t_0,\tau]}(s)\rightarrow (w-s)^{\alpha-1}$, when $\tau\rightarrow w^{-}$ almost everywhere, by applying Lemma \ref{theoBL} to equation \eqref{auxBL}, when $\tau\rightarrow w^-$, we deduce that the term in brackets tends to
$$-\int_{t_0}^w{\big|(w-s)^{\alpha-1}\big|^{p^\prime}}\,ds,$$
while the other term tends, when $\tau\rightarrow w^-$, to
$$\int_{t_0}^w{\big|(w-s)^{\alpha-1}\big|^{p^\prime}}\,ds.$$

Thus we conclude that the right term in \eqref{cont05} tends to zero, when $\tau\rightarrow w^-$, what is equivalent to
\begin{equation}\label{cont04}\lim_{\tau\rightarrow w^-}{J_{t_0,\tau}^\alpha f(\tau)}=J_{t_0,w}^\alpha f(w),\end{equation}
in the topology of $X$. Finally, by items \eqref{cont03} and \eqref{cont04}, $J_{t_0,t}^\alpha f\in C\big([t_0,t_1];X\big)$.

Finally, for $\alpha\geq 1$ we observe that 
$$J_{t_0,t}^\alpha f(t)=J_{t_0,t}^{[(p+1)/(2p)]}\Big\{J_{t_0,t}^{\alpha-[(p+1)/(2p)]} f(t)\Big\},$$
for almost every $t\in[t_0,t_1]$. Since \cite[Theorem 3.1]{CarFe0} ensures that $J_{t_0,t}^{\alpha-[(p+1)/(2p)]} f\in L^p(t_0,t_1;X)$ and $(1/p)<(p+1)/(2p)<1$, the first part of the proof of this theorem ensures that $J_{t_0,t}^\alpha f\in C\big([t_0,t_1];X\big)$.
\end{proof}

To present our next result, we need to recall the definition of the H\"{o}lder continuous functions.

\begin{definition}\label{holderfunction} If $n\in\mathbb{N}$ and $\gamma\in(0,1)$, we define $H^{n,\gamma}(I;X)$ as the space of the $n$-times continuously differentiable functions $f:I\rightarrow X$ such that
$$\big\|f^{(n)}(t)-f^{(n)}(s)\big\|_X\leq M{|t-s|^\gamma},$$
for every $t,s\in[t_0,t_1]$ and some constant $M>0$. If $I$ is a compact set, by introducing the norm
$$\|f\|_{H^{n,\gamma}(I;X)}:={\|f\|_{C^{n}(I;X)}+[\,f^{(n)}\,]_{H^{0,\gamma}(I;X)}},$$
where
$$[\,f^{(n)}\,]_{H^{0,\gamma}(I;X)}:=\sup_{t,s\in I,\,\,t\not=s}{\left[\dfrac{\big\|f^{(n)}(t)-f^{(n)}(s)\big\|_X}{|t-s|^\gamma}\right]},$$
$H^{n,\gamma}(I; X)$ becomes a Banach space. Here, $f^{(j)}(t)$ denotes the classical $j$-times derivative of $f(t)$. When $\gamma \in (0,1)$, we call these functions Hölder continuously differentiable functions of order $n$ with exponent $\gamma$.

\end{definition}

This definition allows us to establish the following relationship between the spaces $L^p(t_0,t_1;X)$, $H^{0,q}\big([t_0,t_1];X\big)$, and the Riemann-Liouville fractional integral.

\begin{theorem}\label{lebsguecontinuity2} Assume that $p\in(1,\infty)$ and $\alpha\in(1/p,1+(1/p))$. If $f\in L^p(t_0,t_1;X)$ and we consider $q=\alpha-(1/p)$, then $J_{t_0,t}^\alpha f\in H^{0,q}\big([t_0,t_1];X\big)$ and there exists a constant $K=K(\alpha,p,t_0,t_1)>0$ such that
\begin{equation*}\|J^\alpha_{t_0,s}f\|_{H^{0,q}([t_0,t_1];X)}\leq K\left[\int_{t_0}^{t_1}{\|f(s)\|_X^p}\,ds\right]^{1/p}.\end{equation*}
\end{theorem}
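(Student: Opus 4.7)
The plan is to combine the sup-norm bound from Theorem \ref{continitial01} with a direct Hölder-seminorm estimate on the difference $J_{t_0,t}^\alpha f(t) - J_{t_0,s}^\alpha f(s)$. Since Theorem \ref{continitial01} already gives continuity on $[t_0,t_1]$ together with $\|J_{t_0,\cdot}^\alpha f\|_{C([t_0,t_1];X)} \le K_1 \|f\|_{L^p(t_0,t_1;X)}$, only the estimate on $[\,J_{t_0,\cdot}^\alpha f\,]_{H^{0,q}([t_0,t_1];X)}$ remains to be proved.

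For $t_0 \le s < t \le t_1$, I would split
$$J_{t_0,t}^\alpha f(t) - J_{t_0,s}^\alpha f(s) = \frac{1}{\Gamma(\alpha)}\int_{t_0}^s \bigl[(t-r)^{\alpha-1}-(s-r)^{\alpha-1}\bigr] f(r)\,dr + \frac{1}{\Gamma(\alpha)}\int_s^t (t-r)^{\alpha-1} f(r)\,dr,$$
and call the norms of these two pieces $\mathcal{A}(s,t)$ and $\mathcal{B}(s,t)$. For $\mathcal{B}(s,t)$ I would apply Hölder's inequality exactly as in the proof of \eqref{new01}; the integrability condition $p'(\alpha-1) > -1$ is equivalent to $\alpha > 1/p$, and a short computation yields $p'(\alpha-1)+1 = p'q$, so $\mathcal{B}(s,t) \le C_1 (t-s)^q \|f\|_{L^p(t_0,t_1;X)}$.

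For $\mathcal{A}(s,t)$, Hölder's inequality gives
$$\mathcal{A}(s,t) \le \frac{1}{\Gamma(\alpha)}\left(\int_{t_0}^s \bigl|(t-r)^{\alpha-1}-(s-r)^{\alpha-1}\bigr|^{p'}\,dr\right)^{1/p'}\|f\|_{L^p(t_0,t_1;X)}.$$
Then the substitutions $u = s-r$ followed by $v = u/(t-s)$ recast the integral in the factor on the right as
$$(t-s)^{p'q}\int_0^{(s-t_0)/(t-s)}\bigl|(v+1)^{\alpha-1}-v^{\alpha-1}\bigr|^{p'}\,dv \le (t-s)^{p'q}\int_0^{\infty}\bigl|(v+1)^{\alpha-1}-v^{\alpha-1}\bigr|^{p'}\,dv.$$
The main point — and really the only delicate step — is that the dimensionless integral on the right is finite precisely because $\alpha \in (1/p, 1+(1/p))$: near $v=0$ the integrand behaves like $v^{(\alpha-1)p'}$ (when $\alpha<1$), integrable iff $\alpha > 1/p$; near $v=\infty$, a mean value argument gives $(v+1)^{\alpha-1}-v^{\alpha-1} = O(v^{\alpha-2})$, whose $p'$-th power is integrable iff $\alpha < 1+(1/p)$. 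So the hypothesized range of $\alpha$ is exactly what makes both tails integrable, which I would isolate as an auxiliary computation.

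Combining the two estimates yields $\|J_{t_0,t}^\alpha f(t) - J_{t_0,s}^\alpha f(s)\|_X \le K_2 (t-s)^q \|f\|_{L^p(t_0,t_1;X)}$, which together with Theorem \ref{continitial01} provides the full $H^{0,q}$-norm estimate with $K = K_1 + K_2$. The only real obstacle is verifying the convergence of $\int_0^\infty |(v+1)^{\alpha-1}-v^{\alpha-1}|^{p'}\,dv$ at both endpoints, and this is what forces the condition $\alpha < 1+(1/p)$ in the statement.
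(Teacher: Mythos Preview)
Your argument is correct and shares the same overall structure with the paper's proof: both invoke Theorem~\ref{continitial01} for the sup-norm part, split the difference $J_{t_0,t}^\alpha f(t)-J_{t_0,s}^\alpha f(s)$ into the same two pieces, and handle the ``tail'' term $\mathcal{B}$ via H\"older exactly as in~\eqref{new01}. The genuine divergence is in the treatment of the overlap term $\mathcal{A}$. The paper writes $(s-r)^{\alpha-1}-(t-r)^{\alpha-1}=(1-\alpha)\int_s^t(\rho-r)^{\alpha-2}\,d\rho$ and then applies Minkowski's inequality for integrals to exchange the $r$- and $\rho$-integrations; this produces the bound $(t-t_0)^{q}-(s-t_0)^{q}$, which is then dominated by $(t-s)^{q}$ using subadditivity of $x\mapsto x^{q}$. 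Your route is instead a pure scaling: the substitution $v=(s-r)/(t-s)$ pulls out exactly the factor $(t-s)^{p'q}$ and reduces the question to the finiteness of the dimensionless integral $\int_0^\infty\bigl|(v+1)^{\alpha-1}-v^{\alpha-1}\bigr|^{p'}\,dv$, whose convergence at $0$ and at $\infty$ encodes precisely the two endpoints $\alpha>1/p$ and $\alpha<1+1/p$. Your approach is arguably cleaner---it yields a H\"older-seminorm constant depending only on $(\alpha,p)$ and makes the role of the hypothesis $\alpha\in(1/p,1+1/p)$ completely transparent---while the paper's Minkowski argument avoids the need to analyze an improper integral.
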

\begin{proof}First, let $t \in (t_0, t_1]$ and note that by H\"{o}lder's inequality, we have (recall inequality \eqref{new01})
\begin{multline}\label{novaeq01211}\big\|J^\alpha_{t_0,t}f(t)-J^\alpha_{t_0,w}f(w)\big|_{w=t_0}\big\|_X\leq\dfrac{1}{\Gamma(\alpha)}\int_{t_0}^t{(t-s)^{\alpha-1}\|f(s)\|_X}\,ds\\
\leq \left(\dfrac{(t-t_0)^{\alpha-(1/p)}}{\big\{p^\prime[\alpha-(1/p)]\big\}^{1/p^\prime}\Gamma(\alpha)}\right)\|f\|_{L^p(t_0,t_1;X)}.\end{multline}

Now, let $t, w \in (t_0, t_1]$ and, without loss of generality, assume that $t > w$. Then
\begin{multline*}
\big\|J^\alpha_{t_0,t} f(t) - J^\alpha_{t_0,w} f(w)\big\|_X \leq \dfrac{1}{\Gamma(\alpha)} \int_w^t (t-s)^{\alpha-1} \|f(s)\|_X \, ds \\
+ \dfrac{1}{\Gamma(\alpha)} \int_{t_0}^w \big|(w-s)^{\alpha-1} - (t-s)^{\alpha-1}\big| \|f(s)\|_X \, ds =: \mathcal{H} + \mathcal{I}.
\end{multline*}

Following the ideas used to obtain \eqref{new01}, we deduce that
\begin{equation}\label{novaeq01}\mathcal{H}\leq \left(\dfrac{(t-w)^{\alpha-(1/p)}}{\big\{p^\prime[\alpha-(1/p)]\big\}^{1/p^\prime}\Gamma(\alpha)}\right)\|f\|_{L^p(t_0,t_1;X)}.\end{equation}

Now, if $\alpha<1$ (the case $\alpha\geq1$ is analogous), H\"older's inequality ensures that
\begin{multline*}\mathcal{I}\leq\dfrac{1}{\Gamma(\alpha)}\left\{\int_{t_0}^w{\big[(w-s)^{\alpha-1}-(t-s)^{\alpha-1}\big]^{p^\prime}}\,ds\right\}^{1/p^\prime}\|f\|_{L^p(t_0,t_1;X)}\\
=\dfrac{1}{\Gamma(\alpha)}\left\{\int_{t_0}^w{\left[-\int_w^t{\dfrac{d}{dr}(r-s)^{\alpha-1}}\,dr\right]^{p^\prime}}\,ds\right\}^{1/p^\prime}\|f\|_{L^p(t_0,t_1;X)}\\
=\dfrac{(1-\alpha)}{\Gamma(\alpha)}\left\{\int_{t_0}^w{\left[\int_w^t{(r-s)^{\alpha-2}}\,dr\right]^{p^\prime}}\,ds\right\}^{1/p^\prime}\|f\|_{L^p(t_0,t_1;X)}.\end{multline*}
By applying Minkowski’s Inequality to Integrals (see \cite[Theorem 5.2]{CarFe0}), we deduce
\begin{multline*}\mathcal{I}\leq\dfrac{(1-\alpha)}{\Gamma(\alpha)}\left\{\int_{w}^t{\left[\int_{t_0}^w{(r-s)^{p^\prime(\alpha-2)}}\,ds\right]^{1/p^\prime}}\,dr\right\}\|f\|_{L^p(t_0,t_1;X)}\\
=\dfrac{(1-\alpha)}{\Gamma(\alpha)}\left\{\int_{w}^t{\left[\dfrac{(r-t_0)^{p^\prime(\alpha-2)+1}-(r-w)^{p^\prime(\alpha-2)+1}}{p^\prime(\alpha-2)+1}\right]^{1/p^\prime}}\,dr\right\}\|f\|_{L^p(t_0,t_1;X)}\\
\leq\dfrac{(1-\alpha)}{\Gamma(\alpha)}\left\{\int_{w}^t\left[\dfrac{(r-t_0)^{\alpha-(1/p)-1}}{\big(p^\prime(\alpha-2)+1\big)^{1/p^\prime}}\,\right]dr\right\}\|f\|_{L^p(t_0,t_1;X)},\end{multline*}
and therefore
\begin{equation}\label{novaeq02}\mathcal{I}\leq (1-\alpha)\left\{\dfrac{(t-t_0)^{\alpha-(1/p)}-(w-t_0)^{\alpha-(1/p)}}{\Gamma(\alpha)\big[\alpha-(1/p)\big]\big[p^\prime(\alpha-2)+1\big]^{1/p^\prime}}\right\}\|f\|_{L^p(t_0,t_1;X)}.\end{equation}

Since $\alpha-(1/p)\in(0,1)$, there exists $\tilde{M}=\tilde{M}(\alpha,t_0,t_1)>0$ such that
\begin{equation*}(t-t_0)^{\alpha-(1/p)}-(w-t_0)^{\alpha-(1/p)}\leq \tilde{M}(t-w)^{\alpha-(1/p)},\end{equation*}
what together with \eqref{novaeq01} and \eqref{novaeq02}, allow us to deduce the existence of a constant ${M}={M}(\alpha,p,t_0,t_1)>0$ such that
\begin{equation}\label{novaeq03}
\big\|J^\alpha_{t_0,t} f(t) - J^\alpha_{t_0,w} f(w)\big\|_X \leq M(t-w)^{\alpha-(1/p)}\|f\|_{L^p(t_0,t_1;X)},
\end{equation}
for every $t, w \in (t_0, t_1]$.

Finally, Theorem \ref{continitial01} and inequalities \eqref{novaeq01211} and \eqref{novaeq03} are enough for us to complete the proof of this theorem.
\end{proof}

As a direct corollary of Theorem \ref{lebsguecontinuity2}, we have:

\begin{corollary}\label{lebsguecontinuity3} Assume that $p\in(1,\infty)$ and $\alpha\in(n+(1/p),n+1+(1/p))$, $n\in\mathbb{N}$. If $f\in L^p(t_0,t_1;X)$ and we consider $q=\alpha-(n+(1/p))$, then $J_{t_0,t}^\alpha f\in H^{n,q}\big([t_0,t_1];X\big)$ and there exists a constant $K=K(\alpha,p,t_0,t_1)>0$ such that
\begin{equation*}\|J^\alpha_{t_0,s}f\|_{H^{n,q}([t_0,t_1];X)}\leq K\left[\int_{t_0}^{t_1}{\|f(s)\|_X^p}\,ds\right]^{1/p}.\end{equation*}
\end{corollary}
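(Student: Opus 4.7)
My plan is to reduce the result to Theorem \ref{lebsguecontinuity2} via the semigroup property of the Riemann--Liouville fractional integral. The case $n=0$ is exactly Theorem \ref{lebsguecontinuity2}, so assume $n\in\mathbb{N}^\ast$. Writing $\beta:=\alpha-n$, we have $\beta\in(1/p,1+(1/p))$, so Theorem \ref{lebsguecontinuity2} applies to $J_{t_0,t}^{\beta}f$ and yields $J_{t_0,t}^{\beta}f\in H^{0,q}([t_0,t_1];X)$ with
\[
\|J_{t_0,t}^{\beta}f\|_{H^{0,q}([t_0,t_1];X)}\leq K_1\|f\|_{L^p(t_0,t_1;X)},
\]
where $q=\beta-(1/p)=\alpha-n-(1/p)$.

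Next I invoke the semigroup property of the RL integral (standard in this line of papers, cf.\ \cite{CarFe0}), to write
\[
J_{t_0,t}^{\alpha}f(t)=J_{t_0,t}^{n}\bigl[J_{t_0,t}^{\beta}f\bigr](t),\qquad t\in[t_0,t_1].
\]
Since the inner function $g:=J_{t_0,t}^{\beta}f$ is continuous on $[t_0,t_1]$ with $g(t_0)=0$ (both follow from Theorem \ref{lebsguecontinuity2} applied at $t=t_0$ via \eqref{novaeq01211}), Cauchy's formula for repeated integration identifies $J_{t_0,t}^{n}g$ with the $n$-fold iterated classical integral of $g$ starting at $t_0$. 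Consequently $J_{t_0,t}^{n}g$ is $n$-times classically differentiable on $[t_0,t_1]$ and
\[
\frac{d^{j}}{dt^{j}}\bigl[J_{t_0,t}^{n}g(t)\bigr]=J_{t_0,t}^{n-j}g(t),\quad j=0,\dots,n-1,\qquad \frac{d^{n}}{dt^{n}}\bigl[J_{t_0,t}^{n}g(t)\bigr]=g(t).
\]
In particular the $n$-th derivative of $J_{t_0,t}^{\alpha}f$ equals $g=J_{t_0,t}^{\beta}f\in H^{0,q}([t_0,t_1];X)$.

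It remains to bound the $H^{n,q}$-norm. For $j=0,\dots,n-1$, the sup-norm bound $\|J_{t_0,t}^{n-j}g\|_{C([t_0,t_1];X)}\leq \bigl[(t_1-t_0)^{n-j}/(n-j)!\bigr]\|g\|_{C([t_0,t_1];X)}$ is immediate from the iterated integral representation, and the $n$-th derivative piece contributes $\|g\|_{H^{0,q}([t_0,t_1];X)}$. Summing these up and combining with the estimate from Theorem \ref{lebsguecontinuity2} applied to $g$ produces a constant $K=K(\alpha,p,t_0,t_1)>0$ with
\[
\|J_{t_0,t}^{\alpha}f\|_{H^{n,q}([t_0,t_1];X)}\leq K\|f\|_{L^p(t_0,t_1;X)}.
\]
No step looks technically difficult; the only point requiring a bit of care is the identification of $J_{t_0,t}^{n}g$ with the iterated classical integral (which in turn relies on $g$ being continuous, vanishing at $t_0$) so that the classical derivatives up to order $n$ behave as stated.
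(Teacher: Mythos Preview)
Your proof is correct and is exactly the natural way to unpack what the paper leaves implicit: the paper gives no proof for Corollary~\ref{lebsguecontinuity3}, presenting it as an immediate consequence of Theorem~\ref{lebsguecontinuity2}, and your argument---splitting $J_{t_0,t}^{\alpha}=J_{t_0,t}^{n}\circ J_{t_0,t}^{\alpha-n}$, applying Theorem~\ref{lebsguecontinuity2} to the inner factor, and differentiating the outer $n$-fold integral---is precisely the intended reduction (compare the same device in the proof of Theorem~\ref{lebsguecontinuity5}). One minor remark: the condition $g(t_0)=0$ is not actually needed for Cauchy's formula or for the differentiation identities $\frac{d^{j}}{dt^{j}}J_{t_0,t}^{n}g=J_{t_0,t}^{n-j}g$; continuity of $g$ alone suffices, so that caveat can be dropped.
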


\begin{remark}\label{lebsguecontinuity4}
  Note that in Theorem \ref{lebsguecontinuity2} and Corollary \ref{lebsguecontinuity3} we can not improve the regularity.  For this purpose, let $p>1$ and $\alpha\in (1/p,1+1/p)$. Consider $q\in (\alpha-1/p,1)$ and choose $\gamma\in(-1/p,q-\alpha)$. Then $f(t)=t^\gamma\in L^p(0,1)$ and $J^\alpha_{0,t}f(t)=ct^{\gamma+\alpha}$, for some constant $c$. Since $\alpha+\gamma<q$, $J^\alpha_{0,t}f(t)\notin H^{0,q}([0,1])$. We can modify this example to the general case of Corollary \ref{lebsguecontinuity3}.
\end{remark}

The final part of this section addresses a case not discussed above, namely when $p > 1$ and $\alpha = n + (1/p)$ for some $n \in \mathbb{N}$. To handle this case, we need to recall the definitions of two spaces (cf. \cite{CarFe2}): the Bounded Mean Oscillation space and the Karapetyants-Rubin space.

\begin{definition}\label{bmospace} If $f\in L^1_{loc}(t_0,t_1;X)$ and
$$[f]_{BMO(t_0,t_1;X)}:=\sup_{[a,b]\subset (t_0,t_1)}{\left[\dfrac{1}{(b-a)}\int_a^b{\|f(s)-\operatorname{avg}_{[a,b]}(f)\|_X}\,ds\right]}<\infty,$$
where
$$\operatorname{avg}_{[a,b]}(f):=\dfrac{1}{(b-a)}\int_a^b{f(s)}\,ds,$$
we say that $f$ is a vector-valued function whose mean oscillation is bounded (finite). We denote the vector space of all those functions by $BMO(t_0,t_1;X)$. Above we use the symbol $L_{loc}^1(t_0, t_1; X)$ to denote the set of functions that belong to $L^1(K; X)$ for any compact $K\subset (t_0, t_1)$.
\end{definition}

\begin{definition}\label{karaspace}  For $\gamma > 0$, the Karapetyants-Rubin space (KR-space, for short) is the collection of functions $f \in \bigcap_{r \geq 1} L^r(t_0, t_1; X)$ that satisfy
$$\sup\{r^{-\gamma}\|f\|_{L^r(t_0,t_1;X)}:r\geq1\}<\infty.$$
We denote this space by $K_{\gamma}(t_0, t_1; X)$. If we consider
$$\|f\|_{K_{\gamma}(t_0, t_1; X)}:=\sup\{r^{-\gamma}\|f\|_{L^r(t_0,t_1;X)}:r\geq1\},$$
then it becomes a Banach space.
\end{definition}

\begin{remark} As discussed in {\color{black}\cite[Theorem 27]{CarFe2}}, for $0 < \gamma < 1$, the space $BMO(t_0, t_1; X) \cap K_\gamma(t_0, t_1; X)$ is a Banach space when equipped with the norm
$$\|\phi\|_\gamma := [\phi]_{BMO(t_0, t_1; X)} + \|\phi\|_{K_\gamma(t_0, t_1; X)}.$$
Moreover, {\color{black}\cite[Theorem 31]{CarFe2}} guarantees that for $p > 1$ and $\gamma \geq 1/p^\prime$, the operator $J^{1/p}_{t_0,t}: L^p(t_0, t_1; X) \rightarrow BMO(t_0,t_1;X) \cap K_\gamma(t_0, t_1; X)$ is bounded and linear, i.e., there exists a constant $K > 0$ such that
$$\|J_{t_0,t}^{1/p}f\|_\gamma \leq K \|f\|_{L^p(t_0, t_1; X)},$$
for every $f \in L^p(t_0, t_1; X).$
\end{remark}

With the aid of the above spaces and the theory established in \cite{CarFe2}, we introduce a new space that plays a crucial role in proving the continuity of the Riemann-Liouville fractional integral of order $n + (1/p)$ in the Bochner-Lebesgue space $L^p(t_0,t_1;X)$.

\begin{definition}\label{lebsguecontinuity6}
For $p> 1$, $\gamma>0$ and  $n\in\mathbb{N}^\ast$, we use the symbol $BK_\gamma^{n,p}(t_0,t_1; X)$ to denote the subspace of $W^{n,p}(t_0,t_1; X)$ consisting of all functions $f:[t_0,t_1]\rightarrow X$ such that $f^{(n)} \in {BMO}(t_0,t_1;X)\cap K_{\gamma}(t_0,t_1;X)$. By considering the norm
\begin{equation*}\|f\|_{BK_\gamma^{n,p}(t_0,t_1; X)}:=\|f\|_{W^{n,p}(t_0,t_1;X)}+\|f^{(n)}\|_{\gamma},\end{equation*}
it becomes a Banach space.
\end{definition}

\begin{remark} Given the theorems already established in \cite{CarFe2} and the completeness of the Bochner-Sobolev spaces, it is straightforward to show that $BK_\gamma^{n,p}(t_0,t_1; X)$ is also a Banach space with the norm defined above. For this reason, we have chosen to omit the proof.
\end{remark}

With the space $BK_\gamma^{n,p}(t_0,t_1; X)$ now introduced, we are ready to present the final result of this section, which completes our analysis of the critical case and is the main focus of this work.

\begin{theorem}\label{lebsguecontinuity5}
  Let $p\in(1,\infty)$ and assume that there exists $n\in\mathbb{N}^\ast$ such that $\alpha=n+1/p$. If $f\in L^p(t_0,t_1;X)$, then $J_{t_0,t}^\alpha f\in BK_\gamma^{n,p}(t_0,t_1; X)$, for $\gamma\geq 1/p^\prime$. Moreover, there exists $M>0$ such that
$$\|J_{t_0,t}^{1/p}f\|_{BK_\gamma^{n,p}(t_0,t_1; X)} \leq M \|f\|_{L^p(t_0, t_1; X)},$$
for every $f \in L^p(t_0, t_1; X).$
\end{theorem}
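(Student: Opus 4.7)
The strategy is to exploit the semigroup identity $J_{t_0,t}^\alpha = J_{t_0,t}^n \circ J_{t_0,t}^{1/p}$, which reduces the problem to two facts that are already available: the boundedness of $J_{t_0,t}^{1/p}$ from $L^p(t_0,t_1;X)$ into $BMO(t_0,t_1;X) \cap K_\gamma(t_0,t_1;X)$ (stated as (ii)-(b$_2$) and proved in \cite[Theorem 31]{CarFe2}), and the elementary observation that $J_{t_0,t}^n$ is nothing but $n$-fold classical integration starting at $t_0$.

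First, I would set $g := J_{t_0,t}^{1/p} f$. By item (ii)-(b$_1$) of the overview (that is, \cite[Theorem 28]{CarFe2}), $g \in L^q(t_0,t_1;X)$ for every $q \geq 1$; in particular $g \in L^p(t_0,t_1;X)$ with $\|g\|_{L^p(t_0,t_1;X)} \leq C_1 \|f\|_{L^p(t_0,t_1;X)}$, and item (ii)-(b$_2$) gives simultaneously $\|g\|_\gamma \leq C_2 \|f\|_{L^p(t_0,t_1;X)}$. The semigroup property for RL fractional integrals of $L^1$-functions then delivers
$$J_{t_0,t}^\alpha f(t) = J_{t_0,t}^n g(t), \quad \text{for almost every } t \in [t_0,t_1].$$

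Second, I would identify $J_{t_0,t}^n g$ with the $n$-fold iterated Bochner integral of $g$ starting at $t_0$ via Cauchy's formula, and then peel off one derivative at a time. Concretely, set $g_0 := g$ and, for $k = 1,\ldots,n$, define inductively $g_k(t) := \int_{t_0}^t g_{k-1}(s)\,ds$. Minkowski's inequality for integrals (\cite[Theorem 5.2]{CarFe0}) provides $\|g_k\|_{L^p(t_0,t_1;X)} \leq (t_1-t_0)\|g_{k-1}\|_{L^p(t_0,t_1;X)}$, so every $g_k$ lies in $L^p(t_0,t_1;X)$ with norm bounded by a multiple of $\|f\|_{L^p(t_0,t_1;X)}$. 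Since each $g_k$, for $k \geq 1$, is the Bochner primitive of an $L^p$-function and satisfies $g_k(t_0) = 0$, the fundamental theorem of calculus for Bochner integrals yields that $g_k$ is weakly differentiable on $(t_0,t_1)$ with weak derivative $g_{k-1}$. Iterating, $g_n = J_{t_0,t}^n g \in W^{n,p}(t_0,t_1;X)$, with $g_n^{(k)} = g_{n-k}$ for $0 \leq k \leq n$; in particular
$$\bigl(J_{t_0,t}^\alpha f\bigr)^{(n)}(t) = g_0(t) = J_{t_0,t}^{1/p} f(t).$$

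Third, combining the $W^{n,p}$-control of $J_{t_0,t}^\alpha f$ coming from the chain of $L^p$-bounds on $g_0,\ldots,g_n$ with the $BMO \cap K_\gamma$-control of $(J_{t_0,t}^\alpha f)^{(n)} = g$ gives
$$\|J_{t_0,t}^\alpha f\|_{BK_\gamma^{n,p}(t_0,t_1;X)} = \|J_{t_0,t}^\alpha f\|_{W^{n,p}(t_0,t_1;X)} + \|g\|_\gamma \leq M\|f\|_{L^p(t_0,t_1;X)},$$
with $M$ depending only on $\alpha, p, \gamma, t_0, t_1$, as required.

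The only delicate point is the bookkeeping in the second step: one must verify that, at each stage, the weak derivative of the Bochner primitive of an $L^p$ vector-valued function is precisely the integrand and that no boundary contribution appears. This is exactly where $g_k(t_0) = 0$ enters, reducing the argument to the standard fundamental theorem of calculus for Bochner integrals of $L^1$-functions; everything else is bookkeeping and the application of the two already-established bounds for $J_{t_0,t}^{1/p}$.
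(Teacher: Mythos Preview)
Your proposal is correct and follows essentially the same approach as the paper: both split $J_{t_0,t}^\alpha = J_{t_0,t}^n\circ J_{t_0,t}^{1/p}$, identify $(J_{t_0,t}^\alpha f)^{(n)} = J_{t_0,t}^{1/p}f$, and then combine the $L^p$-boundedness of the intermediate fractional integrals with the $BMO\cap K_\gamma$ bound from \cite[Theorem~31]{CarFe2}. The only cosmetic difference is that the paper invokes the ready-made identity $\dfrac{d^j}{dt^j}\big[J_{t_0,t}^\alpha f\big]=J_{t_0,t}^{\alpha-j}f$ together with \cite[Theorem~3.1]{CarFe0} for the $L^p$-bounds, whereas you rederive this via Cauchy's iterated-integral formula and Minkowski's inequality; the content is the same.
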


\begin{proof}
  Let  $f\in L^p(t_0,t_1;X)$. Observe that for $j\in\{0,1,\ldots,n\}$ we have that 
  $$\dfrac{d^j}{dt^j}\Big[J_{t_0,t}^\alpha f(t)\Big]=J_{t_0,t}^{n-j+(1/p)} f(t),$$
  for almost every $t\in[t_0,t_1]$. Now, \cite[Theorem 3.1]{CarFe0} and {\color{black}\cite[Theorem 31]{CarFe2}}) allow us to conclude that
  \begin{multline*}\|J_{t_0,t}^{1/p}f\|_{BK_\gamma^{n,p}(t_0,t_1; X)}\\
  \leq \left\{\sum_{j=0}^n\left[\dfrac{(t_1-t_0)^{n-j+(1/p)}}{\Gamma(n-j+(1/p)+1)}\right]+K\left[\dfrac{(t_1-t_0)^{(1/p)}}{\Gamma((1/p)+1)}\right]\right\}\left\|f\right\|_{L^{p}(t_0,t_1; X)},\end{multline*}
  for some constant $K>0$, what completes this proof.
  \end{proof}

\section{Complementary Results and Remarks}
\label{prim}

In this section, we present new results not covered in our previous works (see \cite{CarFe0, CarFe1, CarFe2}),
which are essential to the comprehensive framework introduced at the beginning of this manuscript.

\subsection{Improving \cite[Corollary 1]{CarFe1}}\label{sub01} 

Here we address the previously established relationship between the Riemann-Liouville fractional integral of order $\alpha$ and the weak $L^p$ spaces. 

\begin{definition}\label{weakdef} Consider $f:I\rightarrow X$ a Lebesgue measurable function. We define $\lambda_f:(0,\infty)\rightarrow[0,\infty]$, the distribution function of $f(t)$ by
$\lambda_f(r)=\mu\{t\in I:\| f(t)\|_X>r\}$,
where $\mu$ denotes the Lebesgue measure. Now, if $p\in[1,\infty)$ we define the weak $L^p(I;X)$, denoted by $L_w^p(I;X)$, to be the set of all Bochner measurable functions $f:I\rightarrow X$ such that
\begin{equation*} [f]_{L_w^p(I;X)}:={\sup_{r>0}\Big\{\big[r^p\lambda_f(r)\big]^{1/p}\Big\}}<\infty.\end{equation*}

\end{definition}

From the definition of $L^p_w(I;X)$ and Chebyshev's inequality (see \cite[Remark 4]{CarFe1}), we obtain the following classical result.

\begin{theorem}\label{inclusoeslp}
\begin{enumerate}
  \item[(i)] For $p\in[1,\infty)$, it holds that $L^p(I;X)$ is a proper vector subspace of $L^p_w(I;X)$ and 
  $$[f]_{L^p_w(I;X)}\leq\|f\|_{L^p(I;X)},$$
  for every $f\in L^p(I;X)$.\vspace*{0.2cm}
  \item[(ii)]  If $1\leq p< q<\infty$, it holds that $L_w^q(t_0,t_1;X)\subset L^p(t_0,t_1;X)$. Moreover, we have that
$$\|f\|_{L^p(t_0,t_1;X)}\leq\left(\dfrac{q}{q-p}\right)^{1/p}(t_1-t_0)^{(q-p)/(pq)}[f]_{L^q_w(t_0,t_1;X)},$$
for every $f\in L^q_w(t_0,t_1;X)$. 
\end{enumerate}
\end{theorem}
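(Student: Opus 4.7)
My plan is to treat the two parts separately, with part (i) being essentially a Chebyshev-type computation and part (ii) being a layer cake / distribution function argument followed by an optimization.

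For part (i), the inclusion and norm inequality follow from Markov/Chebyshev. Fix $f \in L^p(I;X)$ and $r>0$; then on the set $E_r := \{t \in I : \|f(t)\|_X > r\}$ we have $\|f(t)\|_X^p > r^p$, so
\begin{equation*}
r^p\,\lambda_f(r) = r^p \mu(E_r) \leq \int_{E_r} \|f(t)\|_X^p \, dt \leq \|f\|_{L^p(I;X)}^p.
\end{equation*}
Taking the $1/p$-power and the supremum over $r > 0$ yields the inequality and hence the inclusion $L^p(I;X) \subset L^p_w(I;X)$. For the properness, I would exhibit a scalar function in $L^p_w \setminus L^p$ and lift it to $X$ by multiplying by a fixed unit vector $x_0 \in X$: the function $g(t) = (t-t_0)^{-1/p} x_0$ on $(t_0, t_1)$ has $\lambda_g(r) = \min\{t_1-t_0,\, r^{-p}\}$, so $r^p \lambda_g(r) \leq 1$ for all $r > 0$ while $\int_{t_0}^{t_1} (t-t_0)^{-1}\,dt = \infty$, showing the inclusion is strict.

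For part (ii), the key tool is the layer cake representation $\|f\|_{L^p(t_0,t_1;X)}^p = p \int_0^\infty r^{p-1} \lambda_f(r)\,dr$, applied to the nonnegative scalar function $t \mapsto \|f(t)\|_X$. For $f \in L^q_w(t_0,t_1;X)$ I have two bounds on $\lambda_f(r)$: the trivial bound $\lambda_f(r) \leq t_1 - t_0$ and the weak-type bound $\lambda_f(r) \leq [f]_{L^q_w(t_0,t_1;X)}^q\, r^{-q}$. Splitting the integral at a parameter $R > 0$ and using each bound on its natural range gives
\begin{equation*}
\|f\|_{L^p(t_0,t_1;X)}^p \leq (t_1 - t_0) R^p + \frac{p}{q-p}\, [f]_{L^q_w(t_0,t_1;X)}^q\, R^{p-q}.
\end{equation*}

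The only real computation is the optimization: differentiating in $R$ and setting the derivative to zero leads to the choice $R^q = [f]_{L^q_w(t_0,t_1;X)}^q / (t_1-t_0)$, after which both terms collapse to multiples of $(t_1 - t_0)^{(q-p)/q} [f]_{L^q_w(t_0,t_1;X)}^p$, and their sum simplifies to $\frac{q}{q-p}(t_1 - t_0)^{(q-p)/q} [f]_{L^q_w(t_0,t_1;X)}^p$. Taking $p$-th roots produces exactly the constant $\left(\frac{q}{q-p}\right)^{1/p}(t_1-t_0)^{(q-p)/(pq)}$ stated in the theorem. I do not anticipate a genuine obstacle here; the one place to be slightly careful is that the layer cake formula must be applied to the scalar function $\|f(\cdot)\|_X$ (which is Lebesgue measurable because $f$ is Bochner measurable), so that the distribution function appearing is precisely $\lambda_f$ as defined.
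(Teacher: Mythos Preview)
Your proof is correct. The paper does not actually prove this theorem: it is stated as a classical result, with the one-line justification ``From the definition of $L^p_w(I;X)$ and Chebyshev's inequality (see \cite[Remark 4]{CarFe1})'' and no proof environment. Your argument for part~(i) is exactly the Chebyshev computation the paper alludes to, together with the standard counterexample $t\mapsto (t-t_0)^{-1/p}x_0$ for properness. For part~(ii) you supply what the paper omits entirely: the layer-cake representation combined with the split at the optimal threshold $R^q=[f]_{L^q_w}^q/(t_1-t_0)$, which indeed reproduces the stated constant $\big(\tfrac{q}{q-p}\big)^{1/p}(t_1-t_0)^{(q-p)/(pq)}$ exactly. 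So you have filled in the details that the paper delegates to a reference; there is nothing to compare approaches against.
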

Finally, we present the improvement of \cite[Corollary 1]{CarFe1}.
\begin{theorem}\label{comp01} If $r\in \big(1/(1-\alpha),\infty\big]$ we have that $J_{t_0,t}^\alpha\big(L^1(t_0,t_1;X)\big)\not\subset L_w^r(t_0,t_1;X)$.
\end{theorem}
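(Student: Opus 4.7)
The plan is to exhibit an explicit counterexample built from a scalar power function. Fix $x_0 \in X$ with $\|x_0\|_X = 1$ (we may assume $X \neq \{0\}$, otherwise the statement is vacuous) and, for a parameter $\beta \in (\alpha,1)$ to be chosen, define
$$f(t) := (t-t_0)^{-\beta}\, x_0, \qquad t \in (t_0,t_1].$$
Since $\beta < 1$, the scalar factor lies in $L^1(t_0,t_1)$, so $f \in L^1(t_0,t_1;X)$. The first key step is to compute $J_{t_0,t}^\alpha f(t)$ explicitly. By the substitution $u = (s-t_0)/(t-t_0)$ in the defining integral, the Beta function yields
$$J_{t_0,t}^\alpha f(t) = \frac{\Gamma(1-\beta)}{\Gamma(\alpha+1-\beta)}\,(t-t_0)^{\alpha-\beta}\, x_0,$$
so $\|J_{t_0,t}^\alpha f(t)\|_X = c\,(t-t_0)^{\alpha-\beta}$ with $c>0$ and exponent $\alpha-\beta<0$, producing a prescribed power singularity at $t_0$.

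The second step is to choose $\beta$ so that this singularity is too strong for $L_w^r$. For $r \in (1/(1-\alpha),\infty)$, the hypothesis gives $1/r < 1-\alpha$, hence the interval $(\alpha+1/r,\,1)$ is non-empty; pick any $\beta$ in it. A direct computation of the distribution function gives, for all sufficiently large $\lambda$,
$$\lambda_{J_{t_0,\cdot}^\alpha f}(\lambda) = \mu\bigl\{t\in(t_0,t_1) : c(t-t_0)^{\alpha-\beta} > \lambda\bigr\} = c^{1/(\beta-\alpha)}\,\lambda^{-1/(\beta-\alpha)},$$
so that $\lambda^{r}\lambda_{J_{t_0,\cdot}^\alpha f}(\lambda) = c^{r/(\beta-\alpha)}\lambda^{\,r - 1/(\beta-\alpha)}$. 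The choice $\beta > \alpha + 1/r$ forces $r - 1/(\beta-\alpha) > 0$, hence $\lambda^{r}\lambda_{J_{t_0,\cdot}^\alpha f}(\lambda) \to \infty$ as $\lambda\to\infty$, giving $[J_{t_0,\cdot}^\alpha f]_{L_w^r(t_0,t_1;X)} = \infty$ and therefore $J_{t_0,t}^\alpha f \notin L_w^r(t_0,t_1;X)$.

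For $r = \infty$, where $L_w^\infty$ is interpreted as $L^\infty$, the argument simplifies: pick any $\beta \in (\alpha,1)$; then $(t-t_0)^{\alpha-\beta}$ is unbounded on $(t_0,t_1]$, so $J_{t_0,t}^\alpha f \notin L^\infty(t_0,t_1;X)$.

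I do not expect a serious obstacle here: the Beta-function identity is standard, and the level-set computation for a pure power function is elementary. The only subtle point is the bookkeeping of the three conditions $\beta<1$ (to keep $f\in L^1$), $\beta>\alpha$ (to create a singularity in $J^\alpha f$), and $\beta > \alpha + 1/r$ (to defeat the weak-$L^r$ norm), which are simultaneously achievable precisely when $r > 1/(1-\alpha)$ — exactly the hypothesis of the theorem, confirming that the threshold is sharp.
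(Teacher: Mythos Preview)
Your argument is correct and self-contained, but it proceeds quite differently from the paper. The paper argues by contradiction in two lines: if $J_{t_0,t}^\alpha\big(L^1(t_0,t_1;X)\big)\subset L_w^r(t_0,t_1;X)$ for some $r>1/(1-\alpha)$, then the embedding $L_w^r(t_0,t_1;X)\subset L^{1/(1-\alpha)}(t_0,t_1;X)$ (Theorem~\ref{inclusoeslp}(ii)) would force $J_{t_0,t}^\alpha\big(L^1(t_0,t_1;X)\big)\subset L^{1/(1-\alpha)}(t_0,t_1;X)$, contradicting \cite[Theorem~7]{CarFe1}. Your route instead builds an explicit counterexample $f(t)=(t-t_0)^{-\beta}x_0$, computes $J_{t_0,t}^\alpha f$ via the Beta integral, and tunes $\beta\in(\alpha+1/r,1)$ so that the resulting power singularity defeats the weak-$L^r$ condition. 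The paper's proof is shorter because it recycles two prior results; yours is independent of those references, constructive, and makes transparent exactly why the threshold $r=1/(1-\alpha)$ is sharp (the admissible window for $\beta$ collapses precisely there). One cosmetic slip: the constant in your expression for $\lambda^{r}\lambda_{J_{t_0,\cdot}^\alpha f}(\lambda)$ should be $c^{1/(\beta-\alpha)}$, not $c^{r/(\beta-\alpha)}$, but this does not affect the divergence argument since only the exponent of $\lambda$ matters.
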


\begin{proof} Assume that there exists $r \in \big(1/(1-\alpha), \infty\big]$ such that $J_{t_0,t}^\alpha\big(L^1(t_0,t_1; X)\big) \subset L_w^r(t_0,t_1; X)$. Since for $m > l$, Theorem \ref{inclusoeslp} ensures that $L_w^m(t_0,t_1; X) \subset L^l(t_0,t_1; X)$, it follows that 
$$J_{t_0,t}^\alpha\big(L^1(t_0,t_1; X)\big) \subset L_w^r(t_0,t_1; X) \subset L^{1/(1-\alpha)}(t_0,t_1; X),$$
which contradicts the conclusions of \cite[Theorem 7]{CarFe1}.
\end{proof}

\subsection{Improving {\color{black} \cite[Theorem 10]{CarFe2}}}\label{sub02} In this previous work, we established that the RL fractional integral of order $\alpha \geq 1$ is continuous from $L^1(t_0,t_1; X)$ to $W_{RL}^{\alpha,1}(t_0,t_1;X)$. However, this result can be improved. To do so, let us first present the definition of $W_{RL}^{\alpha,1}(t_0,t_1;X)$, which was originally introduced by Carbotti and Comi in \cite{CaCo1} (see also \cite{BoId1,IdWa1}) and extended to vector-valued functions in \cite{CarFe2}.

\begin{definition}\label{sobolevriemann11} 
For $\alpha > 0$, we use the symbol $W_{RL}^{\alpha,1}(t_0,t_1; X)$ to denote the subspace of $W^{[\alpha]-1,1}(t_0,t_1; X)$ consisting of all functions $f:[t_0,t_1]\rightarrow X$ such that $D_{t_0,t}^\alpha f \in L^1(t_0,t_1; X)$. By considering the norm
\begin{equation*}\|f\|_{W_{RL}^{\alpha,1}(t_0,t_1;X)}:=\|f\|_{W^{[\alpha]-1,1}(t_0,t_1;X)}+\|D_{t_0,t}^\alpha f\|_{L^1(t_0,t_1;X)},\end{equation*}
the set $W_{RL}^{\alpha,1}(t_0,t_1;X)$ becomes a Banach space. We call $W_{RL}^{\alpha,1}(t_0,t_1;X)$ the RL fractional Bochner-Sobolev spaces.
\end{definition}

Now let us present the main theorem of this subsection.

\begin{theorem}
\label{consobfrac}
For $\alpha\in[1,\infty)$ and $\gamma\in(0,\alpha]$ we have that $J_{t_0,t}^\alpha:L^1(t_0,t_1;X)\rightarrow W_{RL}^{\gamma,1}(t_0,t_1;X)$ is a bounded linear operator. Moreover, it holds that
$$\|J_{t_0,s}^{\alpha}f\|_{W_{RL}^{\gamma,1}(t_0,t_1; X)} \leq \left[\sum_{j=0}^{[\gamma]-1} \dfrac{(t_1-t_0)^{\alpha-j}}{\Gamma(\alpha-j+1)}+\dfrac{(t_1-t_0)^{\alpha-\gamma}}{\Gamma(\alpha-\gamma+1)}\right]\|f\|_{L^1(t_0,t_1; X)},$$
for every $f\in L^1(t_0,t_1;X)$.
\end{theorem}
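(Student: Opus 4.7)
The plan is to reduce everything to two ingredients: the semigroup property $J_{t_0,t}^\beta\circ J_{t_0,t}^\delta=J_{t_0,t}^{\beta+\delta}$ for functions in $L^1(t_0,t_1;X)$ (a Fubini computation on the Euler kernel $\int_s^t(t-r)^{\beta-1}(r-s)^{\delta-1}dr=\Gamma(\beta)\Gamma(\delta)/\Gamma(\beta+\delta)\cdot(t-s)^{\beta+\delta-1}$) and the basic $L^1$-bound $\|J_{t_0,t}^\beta g\|_{L^1(t_0,t_1;X)}\leq[(t_1-t_0)^\beta/\Gamma(\beta+1)]\,\|g\|_{L^1(t_0,t_1;X)}$ recorded in \cite[Theorem 3.1]{CarFe0}, with the convention $J_{t_0,t}^0=\mathrm{Id}$. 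A quick check using $\alpha\geq\gamma$ and the definition of $[\cdot]$ shows that $\alpha-j\geq 0$ for every $j\in\{0,1,\ldots,[\gamma]-1\}$, so every exponent appearing below is admissible.

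First I would identify the ordinary weak derivatives of $J_{t_0,t}^\alpha f$. Writing $J_{t_0,t}^\alpha f = J_{t_0,t}^j(J_{t_0,t}^{\alpha-j}f)$ via the semigroup property and noting that $J_{t_0,t}^{\alpha-j}f\in L^1(t_0,t_1;X)$, the fact that $J_{t_0,t}^j$ is the $j$-fold iterated Bochner primitive of an $L^1$ function yields
$$\dfrac{d^j}{dt^j}\bigl[J_{t_0,t}^\alpha f(t)\bigr]=J_{t_0,t}^{\alpha-j}f(t)$$
in the weak sense on $(t_0,t_1)$, for every $j\in\{0,1,\ldots,[\gamma]-1\}$. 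Summing and invoking the $L^1$-bound term by term gives $J_{t_0,t}^\alpha f\in W^{[\gamma]-1,1}(t_0,t_1;X)$ with
$$\|J_{t_0,t}^\alpha f\|_{W^{[\gamma]-1,1}(t_0,t_1;X)}\leq \sum_{j=0}^{[\gamma]-1}\dfrac{(t_1-t_0)^{\alpha-j}}{\Gamma(\alpha-j+1)}\|f\|_{L^1(t_0,t_1;X)}.$$

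Next I would compute $D_{t_0,t}^\gamma[J_{t_0,t}^\alpha f]$ from its definition together with the semigroup property:
$$D_{t_0,t}^\gamma\bigl[J_{t_0,t}^\alpha f(t)\bigr]=\dfrac{d^{[\gamma]}}{dt^{[\gamma]}}\bigl[J_{t_0,t}^{[\gamma]-\gamma}(J_{t_0,t}^\alpha f)(t)\bigr]=\dfrac{d^{[\gamma]}}{dt^{[\gamma]}}\bigl[J_{t_0,t}^{\alpha+[\gamma]-\gamma}f(t)\bigr].$$
Since $\alpha+[\gamma]-\gamma\geq[\gamma]$, the identification of the previous step—applied to the exponent $\alpha+[\gamma]-\gamma$ with derivative order $[\gamma]$—gives $D_{t_0,t}^\gamma[J_{t_0,t}^\alpha f]=J_{t_0,t}^{\alpha-\gamma}f$ almost everywhere. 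The $L^1$-bound then controls this term by $(t_1-t_0)^{\alpha-\gamma}/\Gamma(\alpha-\gamma+1)\,\|f\|_{L^1(t_0,t_1;X)}$, and adding this to the previous estimate yields exactly the bound in the theorem statement.

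The main obstacle I anticipate is making the weak-derivative identification $\frac{d^j}{dt^j}J_{t_0,t}^\alpha f=J_{t_0,t}^{\alpha-j}f$ fully rigorous in the Bochner-valued setting, particularly when $\alpha-j$ is fractional so that $J_{t_0,t}^{\alpha-j}f$ need not extend continuously to $t_0$. This should be handled by combining the Fubini computation underlying the semigroup property with the classical Bochner fundamental theorem of calculus $(d/dt)J_{t_0,t}^1 g = g$ weakly for $g\in L^1(t_0,t_1;X)$; no computation beyond the tools already developed in \cite{CarFe0,CarFe2} is required, which is why this improvement over \cite[Theorem 10]{CarFe2} fits naturally as a short complementary result.
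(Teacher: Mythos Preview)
Your proposal is correct and follows essentially the same route as the paper: identify $\tfrac{d^j}{dt^j}J_{t_0,t}^\alpha f=J_{t_0,t}^{\alpha-j}f$ via the semigroup property, compute $D_{t_0,t}^\gamma[J_{t_0,t}^\alpha f]=J_{t_0,t}^{\alpha-\gamma}f$, and bound every term with \cite[Theorem~3.1]{CarFe0}. The only cosmetic difference is that the paper splits into the cases $\gamma\notin\mathbb{N}^\ast$ and $\gamma\in\mathbb{N}^\ast$, whereas you treat both at once; the underlying computations are identical.
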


\begin{proof} Note that:

\begin{itemize}
    \item[(i)] For $\gamma \notin \mathbb{N}^\ast$, note that $D_{t_0,t}^\gamma \left[J_{t_0,t}^\alpha f(t)\right] = J_{t_0,t}^{\alpha - \gamma} f(t)$ for almost every $t \in [t_0, t_1]$. Together with \cite[Theorem 3.1]{CarFe0}, this implies that
    $$\big\|D_{t_0,t}^\gamma \left[J_{t_0,t}^\alpha f\right]\big\|_{L^1(t_0,t_1; X)} \leq \frac{(t_1 - t_0)^{\alpha - \gamma}}{\Gamma(\alpha - \gamma + 1)} \|f\|_{L^1(t_0,t_1; X)}.$$
    Then, through direct computation and by using \cite[Theorem 3.1]{CarFe0}, we obtain
    \begin{multline*}\qquad\big\|J_{t_0,s}^{\alpha}f\big\|_{W_{RL}^{\gamma,1}(t_0,t_1; X)}=\sum_{j=0}^{[\gamma]-1}\big\|J_{t_0,t}^{\alpha-j}f\big\|_{L^1(t_0,t_1; X)} + \big\|D_{t_0,t}^\gamma[J_{t_0,t}^\alpha f]\big\|_{L^{1}(t_0,t_1; X)}\\\leq \left[\sum_{j=0}^{[\gamma]-1} \dfrac{(t_1-t_0)^{\alpha-j}}{\Gamma(\alpha-j+1)}+\dfrac{(t_1-t_0)^{\alpha-\gamma}}{\Gamma(\alpha-\gamma+1)}\right]\|f\|_{L^{1}(t_0,t_1; X)}.\end{multline*}
    \item[(ii)] For $\gamma \in \mathbb{N}^\ast$, \cite[Theorem 3.1]{CarFe0} ensures
    $$\qquad\big\|J_{t_0,s}^{\alpha}f\big\|_{W_{RL}^{\gamma,1}(t_0,t_1; X)}=\sum_{j=0}^{\gamma}\big\|J_{t_0,t}^{\alpha-j}f\big\|_{L^1(t_0,t_1; X)}\\\leq \sum_{j=0}^{\gamma} \dfrac{(t_1-t_0)^{\alpha-j}}{\Gamma(\alpha-j+1)}\|f\|_{L^{1}(t_0,t_1; X)}.$$

\end{itemize}
\end{proof}

\subsection{RL fractional integral and essentially bounded functions} \label{sub03}

To conclude this section, we now address the case $p = \infty$ and examine the image of the RL fractional integral of order $\alpha>0$ when applied to the space $L^\infty(t_0,t_1;X)$.

\begin{theorem}\label{infcase1}
If $\alpha\in(0,1)$, then $J_{t_0,t}^\alpha:L^\infty(t_0,t_1;X)\rightarrow H^{0,\alpha}(t_0,t_1;X)$ is a bounded linear operator. Moreover, for any $r\in (\alpha,1)$ it holds that $J_{t_0,t}^\alpha\big(L^\infty(t_0,t_1;X)\big)\not\subset H^{0,r}(t_0,t_1;X)$.
  
\end{theorem}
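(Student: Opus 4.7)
The plan is to prove boundedness via a direct splitting argument analogous to the one in Theorem \ref{lebsguecontinuity2}, but exploiting the essential supremum bound to bring $\|f\|_{L^\infty}$ outside every integral, so that only elementary integrals of $(t-s)^{\alpha-1}$ remain. First I would estimate the supremum norm by noting that, for every $t\in[t_0,t_1]$,
$$\|J_{t_0,t}^\alpha f(t)\|_X\leq \frac{\|f\|_{L^\infty(t_0,t_1;X)}}{\Gamma(\alpha)}\int_{t_0}^{t}(t-s)^{\alpha-1}\,ds=\frac{(t-t_0)^\alpha}{\Gamma(\alpha+1)}\|f\|_{L^\infty(t_0,t_1;X)},$$
which simultaneously controls $\|J_{t_0,t}^\alpha f\|_{C([t_0,t_1];X)}$ and the Hölder seminorm at the left endpoint (taking $w=t_0$).

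For the Hölder seminorm with $t_0<w<t\leq t_1$, I would write
$$J_{t_0,t}^\alpha f(t)-J_{t_0,w}^\alpha f(w)=\frac{1}{\Gamma(\alpha)}\int_w^t(t-s)^{\alpha-1}f(s)\,ds+\frac{1}{\Gamma(\alpha)}\int_{t_0}^w\big[(w-s)^{\alpha-1}-(t-s)^{\alpha-1}\big]f(s)\,ds,$$
pulling $\|f\|_{L^\infty}$ out of both pieces. The first piece contributes $(t-w)^\alpha/\Gamma(\alpha+1)$. For the second, a direct computation gives
$$\int_{t_0}^w\big[(w-s)^{\alpha-1}-(t-s)^{\alpha-1}\big]ds=\frac{(w-t_0)^\alpha+(t-w)^\alpha-(t-t_0)^\alpha}{\alpha},$$
and the key observation is the subadditivity $(t-t_0)^\alpha\leq(w-t_0)^\alpha+(t-w)^\alpha$, valid for $\alpha\in(0,1)$, which together with $(w-t_0)^\alpha\leq(t-t_0)^\alpha$ forces this quantity to lie in $[0,(t-w)^\alpha/\alpha]$. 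Combining both estimates yields $\|J_{t_0,t}^\alpha f(t)-J_{t_0,w}^\alpha f(w)\|_X\leq 2(t-w)^\alpha\|f\|_{L^\infty}/\Gamma(\alpha+1)$, which, together with the sup-norm estimate above, establishes the boundedness into $H^{0,\alpha}([t_0,t_1];X)$.

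For the sharpness assertion, I would exhibit a single explicit counterexample: pick any $e\in X$ with $\|e\|_X=1$ and set $f(t)\equiv e$, which clearly lies in $L^\infty(t_0,t_1;X)$. Then $J_{t_0,t}^\alpha f(t)=(t-t_0)^\alpha e/\Gamma(\alpha+1)$, and for any $r\in(\alpha,1)$ the ratio
$$\frac{\|J_{t_0,t}^\alpha f(t)-J_{t_0,w}^\alpha f(w)\|_X}{(t-w)^r}\Bigg|_{w=t_0}=\frac{(t-t_0)^{\alpha-r}}{\Gamma(\alpha+1)}\longrightarrow\infty\quad\text{as }t\to t_0^+,$$
so $J_{t_0,t}^\alpha f\notin H^{0,r}([t_0,t_1];X)$. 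I expect the main (though still minor) obstacle to be the bookkeeping in the Hölder estimate — specifically, recognizing that the apparently delicate integral $\int_{t_0}^w[(w-s)^{\alpha-1}-(t-s)^{\alpha-1}]ds$ collapses cleanly thanks to the subadditivity of $x\mapsto x^\alpha$, which avoids any mean value or Minkowski-type argument and keeps the constant explicit.
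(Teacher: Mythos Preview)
Your approach is correct and mirrors the paper's proof almost exactly: the same splitting into the tail integral over $[w,t]$ and the overlap integral over $[t_0,w]$, with the same constant-function counterexample for sharpness. Your subadditivity argument for $x\mapsto x^\alpha$ simply makes explicit the H\"older constant that the paper leaves as an unspecified $M(\alpha,t_0,t_1)$; the only blemish is a harmless sign slip in your displayed decomposition (the bracket should read $(t-s)^{\alpha-1}-(w-s)^{\alpha-1}$), which disappears once you pass to norms.
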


\begin{proof} Due to Theorem \ref{continitial01}, we already know that $J_{t_0,t}^\alpha f \in C([t_0,t_1];X)$ and that
\begin{equation}\label{novaeq03112}
\sup_{s\in[t_0,t_1]}\|J^\alpha_{t_0,s}f(s)\|_X\leq K\|f\|_{L^\infty(t_0,t_1;X)}.
\end{equation}
for some constant $K>0$. If we consider $t \in (t_0,t_1]$, we have (recall inequality \eqref{new01})
\begin{multline}\label{novaeq0311222}
\big\|J^\alpha_{t_0,t}f(t)-J^\alpha_{t_0,w}f(w)\big|_{w=t_0}\big\|_X \leq \dfrac{1}{\Gamma(\alpha)} \int_{t_0}^t (t-s)^{\alpha-1} \|f(s)\|_X \, ds \\
\leq \left[\dfrac{(t-t_0)^\alpha}{\Gamma(\alpha+1)}\right] \|f\|_{L^\infty(t_0,t_1; X)}.
\end{multline}

Now, let $t,w \in (t_0,t_1]$ and, without loss of generality, assume that $t > w$. Then
\begin{multline*}
\big\|J^\alpha_{t_0,t}f(t)-J^\alpha_{t_0,w}f(w)\big\|_X \leq \dfrac{1}{\Gamma(\alpha)} \int_w^t (t-s)^{\alpha-1} \|f(s)\|_X \, ds \\
+ \dfrac{1}{\Gamma(\alpha)} \int_{t_0}^w \big|(w-s)^{\alpha-1} - (t-s)^{\alpha-1}\big| \|f(s)\|_X \, ds =: \mathcal{H} + \mathcal{I}.
\end{multline*}

At first, observe that H\"older's inequality ensures that
$$
\mathcal{H} \leq \dfrac{1}{\Gamma(\alpha)} \left[\int_w^t (t-s)^{\alpha-1} \, ds \right] \|f\|_{L^\infty(t_0,t_1; X)} = \left[\dfrac{(t-w)^\alpha}{\Gamma(\alpha+1)}\right] \|f\|_{L^\infty(t_0,t_1; X)}.
$$
while, if $\alpha<1$ (the case $\alpha\geq1$ is analogous), H\"older's inequality gives
\begin{multline*}
\mathcal{I} \leq \dfrac{1}{\Gamma(\alpha)} \left\{ \int_{t_0}^w \big[(w-s)^{\alpha-1} - (t-s)^{\alpha-1}\big] \, ds \right\} \|f\|_{L^\infty(t_0,t_1; X)} \\
= \left[\dfrac{(t-w)^\alpha + (w-t_0)^\alpha - (t-t_0)^\alpha}{\Gamma(\alpha+1)}\right] \|f\|_{L^\infty(t_0,t_1; X)}.
\end{multline*}

Therefore, we have that
\begin{equation}\label{novaeq0312121}
\big\|J^\alpha_{t_0,t}f(t)-J^\alpha_{t_0,w}f(w)\big\|_X \leq \left[\dfrac{2(t-w)^\alpha + (w-t_0)^\alpha - (t-t_0)^\alpha}{\Gamma(\alpha+1)}\right] \|f\|_{L^\infty(t_0,t_1; X)},
\end{equation}
for any $t,w \in (t_0,t_1]$. Since there exists $M = M(\alpha,t_0,t_1)>0$ such that
\begin{equation}\label{novaeq031}
\dfrac{2(t-w)^\alpha + (w-t_0)^\alpha - (t-t_0)^\alpha}{\Gamma(\alpha+1)} \leq M(t-w)^\alpha,
\end{equation}
for any $t,w \in (t_0,t_1]$, we have from \eqref{novaeq03112}, \eqref{novaeq0311222}, \eqref{novaeq0312121}, and \eqref{novaeq031} that there exists $N = N(\alpha,t_0,t_1)>0$ such that
\begin{equation*}
\|J^\alpha_{t_0,s}f\|_{H^{0,\alpha}([t_0,t_1]; X)} \leq N \|f\|_{L^\infty(t_0,t_1; X)}.
\end{equation*}

Finally, note that any nontrivial constant function serves as the desired counterexample for the non-inclusion stated at the end of the theorem.

\end{proof}

Now we present the general case.

\begin{theorem}\label{infcase2}
  Assume that $\alpha \geq 1$.
   \begin{itemize}
   \item[(i)] If $\alpha\not\in \mathbb{N}$ and $q \in \big(0, \alpha+1-[\alpha]\big]$, then $J_{t_0,t}^\alpha: L^\infty(t_0,t_1; X) \rightarrow H^{[\alpha]-1,q}([t_0,t_1]; X)$ is a bounded linear operator. Moreover, for any $r \in \big(\alpha+1-[\alpha], 1\big)$, we have that $J_{t_0,t}^\alpha \big(L^\infty(t_0,t_1; X)\big) \not\subset H^{[\alpha]-1,r}([t_0,t_1]; X)$.\vspace*{0.2cm}
   \item[(ii)] If $\alpha \in \mathbb{N}$, then $J_{t_0,t}^\alpha: L^\infty(t_0,t_1; X) \rightarrow W^{\alpha,\infty}(t_0,t_1; X)$ is a bounded linear operator.
   \end{itemize}
\end{theorem}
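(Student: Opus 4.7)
The plan is to treat the two parts separately, leveraging Theorem \ref{infcase1} (the subcritical case $\alpha \in (0,1)$) together with iterated differentiation of the fractional integral.

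For part (ii) with $\alpha = n \in \mathbb{N}^\ast$, the formula $(J_{t_0,t}^n f)^{(j)}(t) = J_{t_0,t}^{n-j} f(t)$ holds classically for $j = 0, 1, \ldots, n-1$ (obtained by iterated differentiation under the integral; the boundary term at $s = t$ vanishes because the kernel retains a positive power), while the $n$-th weak derivative equals $f \in L^\infty(t_0,t_1; X)$. The estimate $\|J_{t_0,t}^k f\|_{L^\infty} \leq \frac{(t_1-t_0)^k}{\Gamma(k+1)} \|f\|_{L^\infty(t_0,t_1;X)}$ for $k \geq 1$ then sums to give the $W^{n,\infty}$-bound.

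For part (i), set $n := [\alpha] - 1$ and $\beta := \alpha - n = \alpha + 1 - [\alpha] \in (0,1)$, so the target regularity is $H^{n,\beta}$. I would show inductively that $(J_{t_0,t}^\alpha f)^{(j)} = J_{t_0,t}^{\alpha - j} f$ classically for $j = 0, 1, \ldots, n$. The inductive step relies on the identity $\frac{d}{dt} J_{t_0,t}^\sigma f(t) = J_{t_0,t}^{\sigma - 1} f(t)$, valid for $\sigma > 1$ via differentiation under the integral (the boundary term vanishes since $\sigma - 1 > 0$). Since $\alpha - j \geq \beta + 1 > 1$ for each $j \leq n - 1$, the iteration is valid exactly $n$ times and produces $(J_{t_0,t}^\alpha f)^{(n)} = J_{t_0,t}^\beta f$. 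Applying Theorem \ref{infcase1} to this final term yields $J_{t_0,t}^\beta f \in H^{0,\beta}([t_0,t_1]; X)$ with norm bounded by $K\|f\|_{L^\infty}$; combined with the $L^\infty$-bounds on the lower-order derivatives, this places $J_{t_0,t}^\alpha f$ in $H^{n,\beta}([t_0,t_1]; X)$ with the required norm estimate. For general $q \in (0, \beta]$, the embedding $H^{n,\beta} \hookrightarrow H^{n,q}$ on the bounded interval $[t_0,t_1]$ (via $|t-s|^{\beta - q} \leq (t_1 - t_0)^{\beta - q}$) completes the boundedness assertion.

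For the non-inclusion with $r \in (\beta, 1)$, a concrete counterexample suffices: take $f \equiv x_0$ for a fixed $x_0 \in X \setminus \{0\}$, so $J_{t_0,t}^\alpha f(t) = \frac{(t-t_0)^\alpha}{\Gamma(\alpha+1)} x_0$ and $(J_{t_0,t}^\alpha f)^{(n)}(t) = \frac{(t-t_0)^\beta}{\Gamma(\beta+1)} x_0$; the H\"older quotient at the pair $(t, t_0)$ equals $(t - t_0)^{\beta - r}/\Gamma(\beta + 1)$, which diverges as $t \to t_0^+$ since $\beta - r < 0$. The main obstacle I anticipate is keeping the identity $(J_{t_0,t}^\alpha f)^{(j)} = J_{t_0,t}^{\alpha - j} f$ rigorously in the classical (rather than merely a.e.) sense at every iteration, which requires verifying continuity of each intermediate $J_{t_0,t}^\sigma f$ --- this in turn follows from a dominated convergence argument analogous to the one used in the proof of Theorem \ref{continitial01}. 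Everything else reduces to elementary estimates.
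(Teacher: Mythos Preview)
Your proposal is correct and follows essentially the same approach as the paper: differentiate $J_{t_0,t}^\alpha f$ repeatedly via the identity $(J_{t_0,t}^\alpha f)^{(j)} = J_{t_0,t}^{\alpha-j} f$, then apply Theorem \ref{infcase1} to the top-order derivative (and elementary $L^\infty$-bounds to the rest). The paper's proof is in fact omitted, with only a reference to the template of Theorem \ref{lebsguecontinuity5} and the substitution of Theorems \ref{continitial01} and \ref{infcase1} for the cited external results; your write-up fills in exactly those steps, and your explicit embedding $H^{n,\beta}\hookrightarrow H^{n,q}$ and constant-function counterexample are the natural completions the paper leaves implicit.
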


\begin{proof} The proof of this result follows the same steps as in Theorem \ref{lebsguecontinuity5}. However, instead of applying \cite[Theorem 3.1]{CarFe0} and {\color{black}\cite[Theorem 31]{CarFe2}}, we use Theorem \ref{continitial01} (adapted to $L^\infty(t_0,t_1; X)$ as done in estimate \eqref{novaeq03112}) and Theorem \ref{infcase1}. Therefore, we omit the proof.
\end{proof}

\section*{Acknowledgement}
 The second author was supported by CAPES/PRAPG grant
nº 88881.964878/2024-01 and Fundação de Amparo à Pesquisa e Inovação do Espírito
Santo (FAPES) under grant number T.O. 951/2023.

\end{document}